\documentclass[11pt,letterpaper,english]{article}
\usepackage[margin=1.5in]{geometry}

\usepackage{mdwlist}

\usepackage{amssymb,amsbsy,latexsym}
\usepackage{amsmath}
\usepackage{graphics, subfigure, float}
\usepackage{fp, calc}

\usepackage[latin1]{inputenc}
\usepackage[T1]{fontenc} 
\usepackage{fourier}
\usepackage{bm}

\usepackage{amscd,amsthm}





\usepackage[dvips]{graphicx,epsfig,color}
\usepackage{pst-plot}
\usepackage{pst-all}
\usepackage{pstricks-add,pst-func}
\newpsobject{showgrid}{psgrid}{subgriddiv=1,griddots=10,gridlabels=6pt}
\usepackage{comment,verbatim}

\newtheoremstyle{theorem}{1em}{1em}{\slshape}{0pt}{\bfseries}{.}{ }{}
\theoremstyle{theorem}
\newtheorem{theorem}{Theorem}
\newtheorem*{theorem*}{Theorem}

\newtheorem{lemma}[theorem]{Lemma}
\newtheorem*{lemma*}{Lemma}

\theoremstyle{remark}

\newtheorem*{remark*}{Remark}

\providecommand{\setN}{\mathbb{N}}
\providecommand{\setZ}{\mathbb{Z}}

\providecommand{\setR}{\mathbb{R}}
\providecommand{\setC}{\mathbb{C}}

\newcommand{\E}{\mathop{\mathbb{E}}}

\psset{linewidth=1pt, arrowsize=6pt}
\floatstyle{ruled}
\newfloat{algorithm}{tbp}{loa}
\floatname{algorithm}{Algorithm}

 \theoremstyle{definition}
 
  \theoremstyle{plain}
  
  \theoremstyle{plain}
  
  \theoremstyle{plain}


\theoremstyle{definition}

\newtheorem{openquestion*}{Open Question}
\theoremstyle{theorem}


        \def\drawRect#1#2#3#4#5{
           \FPeval{\x2}{(#2) + (#4)} 
           \FPeval{\y2}{(#3) + (#5)} 
           \pspolygon[#1](#2,#3)(\x2,#3)(\x2,\y2)(#2,\y2)
        }

\psset{nodesep=0.8pt}

\usepackage[displaymath,textmath,sections,graphics, subfigure, floats]{preview} 
\PreviewEnvironment{center} 

\definecolor{augpathgray}{gray}{0.5} 

\usepackage{calrsfs} 
\DeclareMathAlphabet{\pazocal}{OMS}{zplm}{m}{n}

\newcommand{\disc}{\operatorname{disc}}

\makeatother
\date{}
\title{A Fourier-Analytic Approach for the Discrepancy of Random Set Systems} 
\author{Rebecca Hoberg \and Thomas Rothvoss\thanks{University of Washington, Seattle. Email: {\tt rothvoss@uw.edu}. Supported by NSF CAREER grant 1651861 and a David \& Lucile Packard Foundation Fellowship.}}
\begin{document}

\maketitle
%
\begin{abstract}
One of the prominent open problems in combinatorics is the discrepancy of set systems
where each element lies in at most $t$ sets. The Beck-Fiala conjecture suggests that the right bound is $O(\sqrt{t})$, but for three decades the only known bound not depending
on the size of set system has been $O(t)$. Arguably we currently lack
techniques for breaking that barrier. 

In this paper we introduce discrepancy bounds
based on \emph{Fourier analysis}. We demonstrate our method on random set systems. Suppose 
one has  $n$ elements and $m$ sets containing each element independently with 
probability $p$. We prove that in the regime of $n \geq \Theta(m^2\log(m))$, the discrepancy
is at most $1$ with high probability. Previously, a result of Ezra and Lovett gave a bound of $O(1)$ 
under the stricter assumption that $n \gg m^t$. 
\end{abstract}

\section{Introduction}
Let  $([n],\Sigma)$ be a finite set system. For a \emph{coloring}  $\chi:[n]\rightarrow \{-1,1\}$ of the elements, we define the \emph{discrepancy} of the coloring to be the maximum imbalance over all subsets in $\Sigma$. 
The discrepancy of the set system is then defined to be the minimum discrepancy 
over all possible colorings, that is
\[
\disc(\Sigma):=\min_{\chi : [n] \to \{-1,1\}} \max_{S \in \Sigma} \Big|\sum_{i \in S} \chi(i)\Big|. 
\]
Using equivalent matrix notation, we can consider $\bm{A} \in \{ 0,1\}^{m \times n}$  as the \emph{incidence matrix} of the set system, where $m$ is the number of sets. Then a vector $\bm{x} \in \{ -1,1\}^n$ corresponds to a coloring and $\|\bm{A}\bm{x}\|_{\infty}$ is its discrepancy.

One of the seminal results in the field is the theorem of Spencer~\cite{SixStandardDeviationsSuffice-Spencer1985}, which says that the discrepancy of 
a set system is always bounded by $O(\sqrt{n \log(2m/n)})$, assuming that $m \geq n$. The original result was based on the \emph{pigeonhole principle}, going back to work of Beck~\cite{Beck-RothsEstimateIsSharp1981}, and the argument did not provide a polynomial time algorithm to actually find those colorings. A recent line of work~\cite{DiscrepancyMinimization-Bansal-FOCS2010,DiscrepancyMinimization-LovettMekaFOCS12,DiscrepancyForConvexSets-RothvossFOCS2014}, starting with the breakthrough of Bansal, provides algorithms to find colorings that match Spencer's Theorem~\cite{SixStandardDeviationsSuffice-Spencer1985}. All of these algorithms iteratively update a fractional coloring starting at $\bm{0}$ and aim to increase the norm until all elements are colored.

In a setting that has a quite different flavor one assumes that the set system is \emph{sparse} 
in the sense that each element is allowed to be in at most $t$ sets. The Beck-Fiala Theorem~\cite{BECK1981}
shows that the discrepancy is at most $2t-1$, using a linear algebraic approach. 
On the other hand, one
can prove an upper bound $O(\sqrt{t\log(m)})$ using a result by Banaszczyk~\cite{bana98}. In fact, Banaszczyk's Theorem says more generally that for any vectors 
$\bm{v}_1,\ldots,\bm{v}_n \in \setR^m$ of length $\|\bm{v}_i\|_2 \leq \frac{1}{5}$ and any convex body 
$K \subseteq \setR^{m}$ with a Gaussian measure of $\gamma_m(K) \geq 1/2$, there is a coloring 
$\bm{x} \in \{ -1,1\}^n$ so that $\sum_{i=1}^n x_i\bm{v}_i \in K$.  
This result was also non-constructive and based on an operation that deforms the convex set iteratively. Only recently, Bansal, Dadush and Garg~\cite{AlgorithmForKomlosBansalDadushGargFOCS16} found an algorithm matching the $O(\sqrt{t \log(m)})$ bound for coloring $t$-sparse set systems; see also the deterministic approach by Levy et al.~\cite{DetDiscMin-LevyRamadasRothvossIPCO2017}. Even more recently, Bansal, Dadush, Garg and Lovett~\cite{BanaszczykAlgorithmically-BansalDadushGargLovett2017} obtained a polynomial time algorithm that provides the general version of Banaszczyk's Theorem. Their result can be rephrased as follows: given
 any vectors $\bm{v}_1,\ldots,\bm{v}_n \in \setR^m$ with $\|\bm{v}_i\|_2 \leq 1$, one can sample a coloring 
$\bm{x} \in \{ -1,1\}^n$ in polynomial time so that the resulting vector $\sum_{i=1}^n x_i\bm{v}_i$
is \emph{$O(1)$-subgaussian}.
Note that these algorithms still iterately
update a fractional coloring, but additionally make sure that there is ``local progress'' compared to the suffered discrepancy.

Still, if we ask for a bound in the Beck-Fiala setting that only depends on the \emph{frequency} 
parameter $t$, no asymptotic improvement has been made beyond the $2t$ bound of \cite{BECK1981}.
To understand the issue, let us make the additional assumption that all sets have size
at most $O(t)$. Then a folklore argument shows that the discrepancy is bounded by 
$O(\sqrt{t \log(t)})$. To see this, color each element independently at random. Then 
for an individual set, the probability of having discrepancy larger than $O(\sqrt{t \log(t)})$
is bounded by $\frac{1}{\textrm{poly}(t)}$. On the other hand, the dependence degree is at most $O(t^2)$. 
Then the Lov{\'a}sz Local Lemma~\cite{ErdosLovasz3ChromaticHypergraphs1975} implies that there is a positive chance for a good coloring. 
Interestingly, assuming that all sets are large, say bigger than $t^{100}$ does not seem to give 
any advantage. One can use linear algebraic methods to reduce the number of elements to at most the number of sets, but this reduction would destroy the advantage we had in the first place. 
For a more extensive introduction to the field of discrepancy theory we recommend the 
excellent textbooks of Chazelle~\cite{Chazelle2000} and Matousek~\cite{matousek1999}.

This is the initial motivation for us to introduce a very different technique into the field of
discrepancy minimization that is based on \emph{Fourier analysis}. A few years ago Kuperberg, Lovett and Peled~\cite{DBLP:conf/stoc/KuperbergLP12} used a Fourier-analytic approach to show the
existence of \emph{rigid combinatorial structures}. 
For example they can show that there is a 
set $\Pi$ of $|\Pi| \leq O(n^k)$ many permutations on $n$ symbols so that if we sample a permutation $\pi \sim \Pi$ then any $k$-tuple of indices in $\pi$ is distributed as if $\pi$ was a uniform permutation. 
Kuperberg et al.~\cite{DBLP:conf/stoc/KuperbergLP12} achieve this by sampling a large enough set of permutations and then analyzing the Fourier transform.
In fact, Fourier analysis is an often used tool in probability theory. We
would also like to point out the work of Borgs, Chayes and Pittel~\cite{DBLP:journals/rsa/BorgsCP01} who prove that for uniform random integers $a_1,\ldots,a_n \in \{ 1,\ldots,2^{o(n)}\}$, with high probability there is an $\bm{x} \in \{ -1,1\}^n$ so that $|\sum_{i=1}^n a_ix_i| \leq 1$.

We apply our method to the setting of random set systems. The model is as follows: we fix a number $n$ of elements and a number $m$ of sets, where we will assume that $n \gg m$. Then for a probability $p \in [0,\frac{1}{2}]$, we draw a matrix $\bm{A} \in \{ 0,1\}^{m \times n}$ at random by setting each entry $A_{ij}$
to $1$ independently with probability $p$. 
If we later talk about sets and elements, then this refers to the 
set system that has $\bm{A}$ as its incidence matrix. In other words, the sets are $S_1,\ldots,S_m$ and
for an element $j \in [n]$ one has $j \in S_i \Leftrightarrow A_{ij} = 1$. 
We set $t := pm$, as the expected frequency of the elements. Our main result is as follows:
\begin{theorem}\label{thm:mainthm}
Suppose that $n \geq Cm^2\log(m)$ and $t \geq C\log(n)$ where $t := pm$ for $p \in [0,\frac{1}{2}]$ and $C>0$ is a 
large enough constant. Draw 
$\bm{A} \in \{ 0,1\}^{m \times n}$ by letting $\Pr[A_{ij}=1]=p$. Then with high probability there is a vector $\bm{x} \in \{ \pm 1\}^{n}$ so that $\|\bm{A}\bm{x}\|_{\infty} \leq 1$.
\end{theorem}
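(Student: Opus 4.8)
The plan is to count colorings by Fourier inversion on the torus $\mathbb{T}^m=[0,1]^m$ and to show the count is positive with high probability. Write $\bm a_1,\dots,\bm a_n\in\{0,1\}^m$ for the columns of $\bm A$, so that $\bm A\bm x=\sum_j x_j\bm a_j$, and set $M:=\bm A\bm A^{\top}=\sum_{j}\bm a_j\bm a_j^{\top}$. I would use the nonnegative Fej\'er-type kernel $g(\theta)=1+\cos(2\pi\theta)\ge 0$, whose Fourier support is $\{-1,0,1\}$, and put $G(\bm\theta)=\prod_{i=1}^m g(\theta_i)$. Expanding $\prod_j 2\cos(2\pi\langle\bm\theta,\bm a_j\rangle)=\sum_{\bm x\in\{\pm1\}^n}e^{2\pi i\langle\bm\theta,\bm A\bm x\rangle}$ and integrating term by term gives
\[
\widetilde N:=\int_{\mathbb{T}^m} G(\bm\theta)\prod_{j=1}^n 2\cos\!\big(2\pi\langle\bm\theta,\bm a_j\rangle\big)\,d\bm\theta=\sum_{\bm b\in\{-1,0,1\}^m}\Big(\prod_i 2^{-|b_i|}\Big)\cdot\#\{\bm x\in\{\pm1\}^n:\bm A\bm x=\bm b\}.
\]
Every coefficient $\prod_i 2^{-|b_i|}$ is strictly positive, so it suffices to prove $\widetilde N>0$ with high probability.

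I would split $\widetilde N=\mathrm{MAIN}+\mathrm{ERROR}$, where $\mathrm{MAIN}$ is the contribution of a small box $B=\{\|\bm\theta\|_\infty\le\rho\}$ around the origin and $\mathrm{ERROR}$ collects the rest together with the approximation errors on $B$. On $B$ one has $g(\theta_i)=2-O(\theta_i^2)$ and $2\cos(2\pi\langle\bm\theta,\bm a_j\rangle)=2\exp(-2\pi^2\langle\bm\theta,\bm a_j\rangle^2+O(\langle\bm\theta,\bm a_j\rangle^4))$, so the integrand is $2^{n+m}\exp(-2\pi^2\bm\theta^{\top}M\bm\theta)(1+o(1))$. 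Choosing $\rho$ slightly above $(pn)^{-1/2}$ lets one extend the Gaussian to all of $\setR^m$ with negligible loss (the bulk eigenvalues of $M$ are $\Theta(pn)$ with high probability), while $\rho$ slightly below $(nt^2)^{-1/4}$ controls the quartic terms; the window is nonempty precisely because $n\gg m^2$. This yields $\mathrm{MAIN}=(1+o(1))\,2^{n+m}\big((2\pi)^{m}\det M\big)^{-1/2}$, and a standard concentration estimate for $\bm A\bm A^{\top}$ pins down $\det M=\Theta\big((pn)^{m}t\big)$ up to $\poly(n)$ factors, so that $\mathrm{MAIN}/2^n=e^{-\Theta(m\log(pn))}$; this last step is where $t\ge C\log n$ enters, guaranteeing the row and column sums concentrate.

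For $\mathrm{ERROR}$ I would distinguish the ``trivial peaks'' $\bm\theta\in\tfrac12\setZ^m\setminus\{\bm 0\}$ from the generic part of $\mathbb{T}^m$. Near a peak $\tfrac12\bm v$ with $|\mathrm{supp}(\bm v)|=s$, one has $2\cos(2\pi\langle\tfrac12\bm v+\bm\phi,\bm a_j\rangle)=\pm\,2\cos(2\pi\langle\bm\phi,\bm a_j\rangle)$, so the cosine product is again a Gaussian in $\bm\phi$, but now the kernel $G$ vanishes to order two in each of the $s$ half-integer coordinates; the same computation as for $\mathrm{MAIN}$ shows this peak contributes at most $(O(1)/(pn))^{s}\cdot\mathrm{MAIN}$ in absolute value. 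Summing over all $\bm v$ gives $\big((1+O(1)/(pn))^m-1\big)\cdot\mathrm{MAIN}=o(\mathrm{MAIN})$, using $pn\ge tn/m\gg m$, which holds once $n\ge Cm^2\log m$. On the generic region --- points at distance $\ge\delta$ from $\tfrac12\setZ^m$ --- one needs a uniform bound $\prod_j|2\cos(2\pi\langle\bm\theta,\bm a_j\rangle)|\le 2^n e^{-\Omega(\cdot)}$: pointwise this is an anti-concentration statement, namely that a random sparse column $\bm a_j$ with $\approx t$ ones typically has $\langle\bm\theta,\bm a_j\rangle$ bounded away from $\tfrac12\setZ$ unless $\bm\theta$ itself is close to $\tfrac12\setZ^m$, proved by a Chernoff bound over the $n$ independent columns; a $\delta$-net of $\mathbb{T}^m$ of size $(1/\delta)^m$ together with a Lipschitz estimate then upgrades it to all $\bm\theta$, and comparing $2^n e^{-\Omega(\cdot)+m\log(1/\delta)}$ against $\mathrm{MAIN}/2^n=e^{-\Theta(m\log(pn))}$ is the second place where $n\gtrsim m^2\log m$ is used.

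The main obstacle is this last step: establishing the pointwise anti-concentration with the correct quantitative dependence on $t$, $p$, and the distance $\delta$ to $\tfrac12\setZ^m$, and then choosing $\delta$ (polynomially small in $n$) so that the union bound over the net loses less than the $e^{-\Omega(n/\poly)}$ gained from having $n$ independent columns, while still leaving enough room to overcome the $e^{-\Theta(m\log(pn))}$ deficit of $\mathrm{MAIN}$ and the $\poly(n)$ slack in the determinant estimate. Everything else --- the Gaussian approximation on $B$, the eigenvalue and determinant bounds for $\bm A\bm A^{\top}$, and the peak-by-peak bookkeeping --- is routine once this generic tail bound is in hand.
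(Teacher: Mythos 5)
Your framework is, after unwinding normalizations, the same as the paper's: the kernel $g(\theta)=1+\cos(2\pi\theta)$ is exactly $2\hat{R}(\theta)$ for the paper's smoothing variable $R\in\{-1,0,1\}$ with $\Pr[R=\pm 1]=\frac14$, and your $\widetilde N$ equals $2^{n+m}\Pr[\bm{A}\bm{x}+\bm{R}=\bm{0}]$, so the split into a central spike, half-integral spikes, and a generic far region mirrors Lemmas~\ref{lem:lowerbound}, \ref{lem:halfintegralbound} and \ref{lem:intDhatupperbd}. The central-spike and peak bookkeeping are sound in spirit, with two quantitative caveats you should fix: (a) with $\rho$ just above $(pn)^{-1/2}$ you capture only an $e^{-\Theta(m\log m)}$ fraction of the Gaussian mass (whose effective radius is $\sqrt{m/(pn)}$), not $1-o(1)$; this still gives $\mathrm{MAIN}\geq 2^{n+m}\cdot n^{-\Theta(m)}$, which is all you need and all the paper proves. (b) Your generic region must begin at distance $\Theta(1/\sqrt{t})$ from $\tfrac12\setZ^m$, not at distance $\rho$: at distance $\delta$ the per-column gain is only $1-\Theta(p\delta^2)$, which is vacuous at $\delta\approx(pn)^{-1/2}$. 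So the peak neighborhoods must be balls of radius $\Theta(1/\sqrt{t})$, where the multiplicative Gaussian approximation degrades ($nt^2\delta^4=\Theta(n)$ at the boundary); the paper handles this by using only positivity of every cosine factor on that ball for the central peak, and a pointwise ratio bound $\hat{\bm{R}}(\bm{\theta}+\bm{s})/\hat{\bm{R}}(\bm{\theta})\leq\prod_{i\in\mathrm{supp}(\bm{s})}32\theta_i^2$ for the others.

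The genuine gap is your treatment of the generic region by a net, union bound, and Lipschitz transfer; this cannot reach the stated regime $n\geq Cm^2\log(m)$. The function $\prod_j|\cos(2\pi\langle\bm{a}_j,\bm{\theta}\rangle)|$ has Lipschitz constant of order $n\sqrt{t}$ (and its logarithm is not Lipschitz at all near zeros of a factor), so transferring a pointwise bound of $e^{-\Omega(p\delta^2 n)}=e^{-\Omega(n/m)}$ from a net point to its cell requires net spacing $e^{-\Omega(n/m)}$, hence a net of size $e^{\Omega(n)}$, which destroys the union bound. Conversely, with a polynomially fine net (size $n^{O(m)}$, which the union bound does tolerate when $n\gg m^2\log n$) the Lipschitz error is only polynomially small in $n$ and cannot beat $\mathrm{MAIN}\approx n^{-\Theta(m)}$ once $m$ is large. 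Optimizing the trade-off, the net route needs roughly $n\gtrsim m^3\log n$ --- the regime of the independent Franks--Saks work cited in the paper, not $n\geq Cm^2\log m$. The paper avoids nets entirely: in Lemma~\ref{lem:GeneralBoundOnSumOfFarFourierCoeff} it treats the single scalar $\Phi_n=\int_K|\hat{\bm{D}}(\bm{\theta})|\,d\bm{\theta}$ as a random variable, shows that conditioned on the first $k-1$ columns the $k$-th column contracts it by $1-\Theta(p\delta^2)$ in expectation (using the one-column estimate of Lemma~\ref{lem:ExpectedDiscFourierDecayForOneFactor}, which is the anticoncentration input you correctly identified but did not prove), and concludes by a martingale concentration bound that $\Phi_n\leq e^{-\Theta(n/m)}$ with probability $1-e^{-\Theta(n/m)}$, with no union bound over $\bm{\theta}$ and hence no loss of a factor of $m$ in the exponent. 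You should replace the net argument with an expectation-of-the-integral argument of this kind.
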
 
Here the phrase ``with high probability'' means with probability $1-\frac{1}{\textrm{poly}(n)}$ where the exponent of the polynomial can be made as large as desired, depending on the constant $C$.
The discrepancy of random set systems has been studied before by 
Ezra and Lovett~\cite{BeckFialaForRandomSetSystems-EzraLovett2016}. Their random model is 
slightly different as for each element they pick exactly $t$ random sets that will contain it.
Either way, for $m \geq n$,  they can show a discrepancy of $O(\sqrt{t\log(t)})$ based on the 
Lov{\'a}sz Local Lemma argument that we mentioned earlier. In the somewhat extreme case of 
$n \gg m^t$ they obtain a discrepancy of $O(1)$. Their argument relies on the observation that in 
this regime, the matrix  $\bm{A}$ will contain every possible column a large constant number of times.

For notation, note that we write all vectors and matrices in bold font.

\section{Overview and Preliminaries}\label{sec:preliminaries}

In the remainder of this paper we study the following random experiment: 
we pick a coloring $\bm{x} \sim \{ -1,1\}^n$ uniformly at random
and let $\bm{D} := \bm{A}\bm{x} \in \setZ^m$ be the random variable that gives the \emph{signed discrepancy}. It would be too naive to hope that $\Pr[\bm{D} = \bm{0}] > 0$ for most matrices $\bm{A}$.
For example if there is even a single set $i$ with an odd number of elements, then $\Pr[\bm{D}=\bm{0}]=0$, so we need to allow some error $\Delta \in \setN$. 
Note that in our setting we will be able to choose $\Delta=1$, but since our framework may apply to settings with larger $\Delta$ we give a more general definition.

Let $\pazocal{R}(\Delta)$ be the distribution of a random variable $R = \sum_{j=1}^{\Delta} r_j$ that is the sum of independent 
random variables $r_j \in \{ -1,0,1\}$ with $\Pr[r_j=1] = \Pr[r_j=-1]=\frac{1}{4}$. By $\pazocal{R}(\Delta)^m$ we denote the distribution of an $m$-dimensional random vector $\bm{R}$ that has 
every coordinate independently drawn from $\pazocal{R}(\Delta)$. To complete our random experiment, 
we draw $\bm{R} \sim \pazocal{R}(\Delta)^m$ and set $\bm{X} := \bm{D} + \bm{R}$.
Then we will prove that $\Pr[\bm{X}=\bm{0}] > 0$, which implies that $\Pr[\|\bm{D}\|_{\infty} \leq \Delta]>0$ as clearly $\bm{R} \in \{ -\Delta,\ldots,\Delta\}^m$.

It appears challenging to show $\Pr[\bm{X}=\bm{0}] > 0$ as the probability in question will be
exponentially small. Similar to Kuperberg et al.~\cite{DBLP:conf/stoc/KuperbergLP12}, this can be done using a 
custom-tailored multi-dimensional \emph{central limit theorem}. 

For a vector-valued random variable $\bm{X} \in \setR^m$, the \emph{Fourier Transform} is
the complex-valued function
\[
 \hat{\bm{X}} : \setR^m \to \setC \quad \textrm{with} \quad  \hat{\bm{X}}(\bm{\theta}) = \E\big[\exp(2\pi i \left<\bm{X},\bm{\theta}\right>)\big] \quad \forall \bm{\theta} \in \setR^m. 
\]
The crucial property of the Fourier coefficients is that they can be used 
to reconstruct the probability of events: 
\begin{lemma}[Fourier Inversion Formula]\label{lem:inversionformula}
For any integer-valued random variable $\bm{X} \in \setZ^m$ and vector $\bm{\lambda} \in \setZ^m$ one has
\[
 \Pr[\bm{X} = \bm{\lambda}] = \int_{[-\frac{1}{2},\frac{1}{2})^m} \hat{\bm{X}}(\bm{\theta}) \cdot \exp\big(-2\pi i \left<\bm{\lambda},\bm{\theta}\right>\big) d\bm{\theta} 
\]
\end{lemma}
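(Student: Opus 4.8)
The plan is to expand $\hat{\bm{X}}$ as a Fourier series indexed by $\setZ^m$ and then integrate term by term, exploiting the orthogonality of the characters $\bm{\theta}\mapsto \exp(2\pi i\left<\bm{j},\bm{\theta}\right>)$, $\bm{j}\in\setZ^m$, on the torus $[-\tfrac12,\tfrac12)^m$. First I would write, straight from the definition and the fact that $\bm{X}$ is supported on $\setZ^m$,
\[
 \hat{\bm{X}}(\bm{\theta}) = \E\big[\exp(2\pi i\left<\bm{X},\bm{\theta}\right>)\big] = \sum_{\bm{k}\in\setZ^m} \Pr[\bm{X}=\bm{k}]\cdot\exp(2\pi i\left<\bm{k},\bm{\theta}\right>).
\]
Since $\sum_{\bm{k}\in\setZ^m}\Pr[\bm{X}=\bm{k}]=1$ and each exponential has modulus $1$, this series converges absolutely and uniformly in $\bm{\theta}$, so $\hat{\bm{X}}$ is bounded and continuous and the series may be integrated against the bounded factor $\exp(-2\pi i\left<\bm{\lambda},\bm{\theta}\right>)$ term by term over the finite-measure set $[-\tfrac12,\tfrac12)^m$ (Tonelli / dominated convergence). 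This yields
\[
 \int_{[-\frac12,\frac12)^m}\hat{\bm{X}}(\bm{\theta})\exp(-2\pi i\left<\bm{\lambda},\bm{\theta}\right>)\,d\bm{\theta} = \sum_{\bm{k}\in\setZ^m}\Pr[\bm{X}=\bm{k}]\int_{[-\frac12,\frac12)^m}\exp\big(2\pi i\left<\bm{k}-\bm{\lambda},\bm{\theta}\right>\big)\,d\bm{\theta}.
\]

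Next I would evaluate the inner integral. For any $\bm{j}\in\setZ^m$ it factors across coordinates as $\prod_{\ell=1}^m\int_{-1/2}^{1/2}\exp(2\pi i j_\ell\theta_\ell)\,d\theta_\ell$, and each one-dimensional factor equals $1$ when $j_\ell=0$ and $\frac{\sin(\pi j_\ell)}{\pi j_\ell}=0$ when $j_\ell$ is a nonzero integer. Hence the inner integral equals $1$ if $\bm{j}=\bm{0}$ and $0$ otherwise. Applying this with $\bm{j}=\bm{k}-\bm{\lambda}$ — which is an integer vector since $\bm{\lambda}\in\setZ^m$ — annihilates every term of the sum except the one with $\bm{k}=\bm{\lambda}$, leaving precisely $\Pr[\bm{X}=\bm{\lambda}]$, which is the claim.

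There is essentially no obstacle here; the only point needing a word of care is the interchange of summation and integration, and that is immediate because the support is countable, the probabilities are summable, and the torus has finite Lebesgue measure. If one wishes to avoid even that, one can instead truncate the series to $\|\bm{k}\|_{\infty}\le N$, integrate the resulting finite sum exactly via the coordinatewise computation above, and let $N\to\infty$, using $\sum_{\|\bm{k}\|_\infty>N}\Pr[\bm{X}=\bm{k}]\to 0$ to control the tail. Either way the proof is short and self-contained.
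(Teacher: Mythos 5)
Your proof is correct and is essentially the same argument as the paper's: the paper interchanges the expectation with the integral rather than first writing the expectation as a sum over $\setZ^m$, but both reduce to the same orthogonality computation $\int_{[-\frac12,\frac12)^m}\exp(2\pi i\left<\bm{j},\bm{\theta}\right>)\,d\bm{\theta}=\mathbf{1}[\bm{j}=\bm{0}]$ evaluated coordinatewise. Your justification of the interchange is in fact slightly more explicit than the paper's.
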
 
The proof is standard, but for the sake of completeness it can be found in the Appendix. 
As a side remark, note that the coefficients $\hat{\bm{X}}(\bm{\theta})$ are \emph{$\setZ^m$-periodic} and 
instead of integrating over $[-\frac{1}{2},\frac{1}{2}]^m$ one could have integrated over
any set $Q \subseteq \setR^m$ that provides a \emph{tiling} in the sense that $\setZ^m + Q$ partitions the
whole $\setR^m$ apart from measure-0 boundaries.

As we are interested in the case of $\bm{\lambda} = \bm{0}$, the Fourier inversion formula simplifies to
\begin{equation} \label{eq:PrXequalsZeroFTInversion}
  \Pr[\bm{X} = \bm{0}] = \int_{[-\frac{1}{2},\frac{1}{2})^m} \hat{\bm{X}}(\bm{\theta}) d\bm{\theta}
 \stackrel{\textrm{independence}}{=} \int_{[-\frac{1}{2},\frac{1}{2})^m} \hat{\bm{D}}(\bm{\theta}) \cdot \hat{\bm{R}}(\bm{\theta}) d\bm{\theta}.
\end{equation}
Note that $|\hat{\bm{X}}(\bm{\theta})| \leq 1$ for all $\bm{\theta} \in \setR^m$
and since $\bm{X}$ is a \emph{symmetric} random vector, we even have $\hat{\bm{X}}(\bm{\theta}) \in \setR^m$ (the same holds for $\hat{\bm{D}}(\bm{\theta})$ and $\hat{\bm{R}}(\bm{\theta})$). So the challenge is to prove that the positive terms in \eqref{eq:PrXequalsZeroFTInversion} dominate the negative terms. Let $\pazocal{B}^p(\bm{c},r) := \{ \bm{x} \in \setR^m : \|\bm{x}-\bm{c}\|_p \leq r\}$ be the $\ell_p$-ball centered around $\bm{c}$. 
Our analysis works along the following lines: 
\begin{enumerate}
\item[(1)] It is not hard to obtain an explicit expression for the value of $\hat{\bm{D}}(\bm{\theta})$ and with high probability for all $\|\bm{\theta}\|_2 \leq O(\frac{1}{\sqrt{t}})$ that expression can be simplified to 
\[
  \hat{\bm{D}}(\bm{\theta}) = \exp\Big(-2\pi^2 \bm{\theta}^T(\bm{A}\bm{A}^T)\bm{\theta} \pm O(nt^2) \cdot \|\bm{\theta}\|_2^4\Big) 
\] 
In particular the good news is that for $\|\bm{\theta}\|_2 \leq O(\frac{1}{\sqrt{t}})$ one has $\hat{\bm{D}}(\bm{\theta}) > 0$. In fact, a large enough fraction of this positive mass is already 
contained in the significantly smaller ball $\pazocal{B}^2(\bm{0},O(\frac{1}{\sqrt{n}}))$. Integrating gives
\begin{equation} \label{eq:IntegralOverOriginSpike}
 \int_{\|\bm{\theta}\|_2 \leq O(\frac{1}{\sqrt{n}})} \hat{\bm{X}}(\bm{\theta}) d\bm{\theta} \geq \frac{1}{2}\int_{\|\bm{\theta}\|_2 \leq O(\frac{1}{\sqrt{n}})} \hat{\bm{D}}(\bm{\theta}) d\bm{\theta} \geq n^{-c_1m}
\end{equation}
for some constant $c_1>0$. Here we use that $\frac{1}{2} \leq \hat{\bm{R}}(\bm{\theta}) \leq 1$ for all $\|\bm{\theta}\|_2 \leq O(\frac{1}{\sqrt{n}})$
as we will later see.
Note that the very modest positive weight of~\eqref{eq:IntegralOverOriginSpike} has to compensate for all negative contributions elsewhere.
\end{enumerate}

\begin{enumerate}
\item[(2)] A crucial observation is that the quantity $|\hat{\bm{D}}(\bm{\theta})|$ is \emph{$\frac{1}{2}\setZ^m$-periodic}, meaning in particular that for  $\bm{\theta} \in \setR^m$  and $\bm{s} \in \Lambda := \{ -\frac{1}{2},0,\frac{1}{2} \}^m$ we have $|\hat{\bm{D}}(\bm{\theta}+\bm{s})| = |\hat{\bm{D}}(\bm{\theta})|$. While for $\|\bm{\theta}\|_2 \leq O(\frac{1}{\sqrt{t}})$, we know that $\hat{\bm{D}}(\bm{\theta}) > 0$, the values $\hat{\bm{D}}(\bm{\theta}+\bm{s})$ can be either positive or negative if $\bm{s} \in \Lambda \setminus \{ \bm{0}\}$. In fact, it is a good idea to imagine the
\emph{Fourier landscape} as visualized in the figure below with ``spikes'' around all half-integral points.
\begin{center}
\psset{unit=6cm}
\begin{pspicture}(-0.6,-0.7)(0.6,0.6)
\multido{\N=-0.5+0.5}{3}{
\multido{\n=-0.5+0.5}{3}{
\rput[c](\N,\n){\rput[c]{-45}{\psellipticwedge[fillstyle=solid,fillcolor=black!15!white,linewidth=0.25pt](0,0)(0.15,0.1){0}{-1}}}
\rput[c](\N,\n){\rput[c]{-45}{\psellipticwedge[fillstyle=solid,fillcolor=black!30!white,linewidth=0.25pt](0,0)(0.10,0.066){0}{-1}}}
\rput[c](\N,\n){\rput[c]{-45}{\psellipticwedge[fillstyle=solid,fillcolor=black!45!white,linewidth=0.25pt](0,0)(0.05,0.033){0}{-1}}}
}
}
\drawRect{linestyle=dashed}{-0.5}{-0.5}{1}{1}
\psdots[linecolor=darkgray](-0.5,-0.5)(0,-0.5)(0.5,-0.5)(-0.5,0)(0,0)(0.5,0)(-0.5,0.5)(0,0.5)(0.5,0.5)
\cnode*(0,0){2.5pt}{origin} \nput[labelsep=2pt]{0}{origin}{$\bm{0}$}
\rput[c](-0.25,0.55){$[-\frac{1}{2},\frac{1}{2}]^m$}
\pscircle(0,0){0.18}
\cnode*[linecolor=darkgray,fillcolor=lightgray](0.5,0.5){2.5pt}{t} \nput[labelsep=2pt]{0}{t}{$\bm{s} \in \{ -\frac{1}{2},0,\frac{1}{2}\}^m$}
\SpecialCoor
\pnode(0.17;-45){B1}
\pnode(0.28;-60){B2}
\ncline{->}{B2}{B1}
\nput[labelsep=-2pt]{-45}{B2}{$\pazocal{B}^2(\bm{0},O(\sqrt{\tfrac{1}{t}}))$}
\rput[c](0,-0.7){Visualization of $|\hat{\bm{D}}(\bm{\theta})|$}
\end{pspicture}
\end{center}
This is the point where the properties of the additional random term $\bm{R} \sim \pazocal{R}(\Delta)^m$ come into play. First, we are able to show that for $\bm{s}\in \Lambda$ we have
\[\hat{\bm{R}}(\bm{\theta}+\bm{s})\le \hat{\bm{R}}(\bm{\theta})\cdot \prod_{s_i\ne 0}\theta_i^{2\Delta}.\]

Using this we are able to show that if $\Delta\ge 1$ and $\|\bm{\theta}\|_{2} \leq c_2$ for a small enough constant $c_2>0$, then 
\[
  |\hat{\bm{X}}(\bm{\theta})| > 2\sum_{\bm{s} \in \Lambda \setminus \{ \bm{0}\}} |\hat{\bm{X}}(\bm{\theta} + \bm{s})|.
\]
In particular this means the positive spike close to the origin can compensate 
simultaneously for all the potentially negative spikes around the $2^{\Theta(m)}$ points in $\Lambda \setminus \{ \bm{0} \}$.
 \end{enumerate}

\begin{enumerate}
\item[(3)] Finally we need to argue that the coefficients $|\hat{\bm{D}}(\bm{\theta})|$ decay quickly if
$\bm{\theta}$ is far from any half-integral vector. In fact, if $d_2(\bm{\theta},\Lambda) := \min\{ \|\bm{\theta} - \bm{s}\|_2 \mid \bm{s} \in \Lambda\}$ denotes 
the Euclidean distance to $\Lambda$, then one can show that
\[
  \E[|\hat{\bm{D}}(\bm{\theta})|] \leq \exp\left(-c_3 n \cdot \min\left\{ p \cdot d_2(\bm{\theta},\Lambda)^2,1\right\}\right)
\]
where the expectation is over the random choice of the incidence matrix $\bm{A}$. Then with high probability even the integral over all points that are far from any $\{-\frac{1}{2},0,\frac{1}{2}\}$-point is extremely tiny: 
\[
   \int_{\bm{\theta} \in [\frac{1}{2},\frac{1}{2})^m : d_2(\bm{\theta},\Lambda) \geq \Theta(1/\sqrt{t})} |\hat{\bm{D}}(\bm{\theta})| \leq \exp\Big(-c_4 \cdot \frac{n}{m}\Big)
\]
for some $c_4 > 0$.
\end{enumerate}

We will spend the remainder of this paper to fill in the details.


\section{Properties of $R$}\label{sec:propertiesofR}

Recall that we have defined $\pazocal{R}(\Delta)$ as the sum $R = \sum_{j=1}^{\Delta} r_j$
of independent random variables $r_j \in \{ -1,0,1\}$ with $\Pr[r_j=+1]=\frac{1}{4}=\Pr[r_j=-1]$. 
In this paper, we are able to take $\Delta=1$ and so the distribution takes on a very simple form. However, we would like our framework to be useful in other settings where $\Delta \ge 1$ is needed, and so we give the properties of $R$ in this more general setting.

Defined as above, the distribution of $R \sim \pazocal{R}(\Delta)$ is approximately a \emph{discrete Gaussian} with 
variance $\Theta(\Delta)$. We will need a couple of estimates in particular concerning 
the \emph{rate of decay} of $\hat{R}(\theta)$. First, for $\theta \in \setR$ we have
\[
  \hat{R}(\theta) = \E\Big[\exp\Big(2\pi i \cdot \theta \cdot \sum_{j=1}^{\Delta} r_j\Big)\Big] 
\stackrel{\textrm{independence}}{=} \prod_{j=1}^{\Delta} \E\big[\exp(2\pi i \cdot \theta r_j)\big]
= \Big(\frac{1}{2}+\frac{1}{2}\cos(2\pi \cdot \theta)\Big)^{\Delta} 
\]
using that $\exp(i z) = i \sin(z) + \cos(z)$ for all $z \in \setR$.
Using the fact that the Taylor expansion of 
$f(z)=\ln( \frac{1}{2} + \frac{1}{2}\cos(z))$ around $z=0$ is 
$f(z) = -\frac{1}{4}z^2 - \frac{1}{96}z^4 \pm O(z^6)$, 
we obtain
\begin{equation} \label{eq:OneDimRhatUpperBound}
 \hat{R}(\theta) \leq \exp(-\pi^2\cdot \Delta \cdot \theta^2) \quad \forall |\theta| \leq \frac{1}{2}.
\end{equation}
The Taylor expansion
also gives a lower bound of 
\begin{equation} \label{eq:OneDimRhatLowerBound}
\hat{R}(\theta) \geq \exp(-\pi^2\cdot \Delta\cdot \theta^2 - 20 \cdot \Delta \cdot \theta^4) \quad \forall  |\theta| \leq \frac{1}{4}.
\end{equation}
From these formulas we can derive the following:

\begin{lemma} \label{lem:PropertiesOfMultiDimR}
The random vector $\bm{R} \sim \pazocal{R}(\Delta)^m$ has the following properties where $\bm{\theta} \in \setR^m$: 
\begin{enumerate}
\item[(i)] For $\|\bm{\theta}\|_{\infty} \leq \frac{1}{2}$ one has $\hat{\bm{R}}(\bm{\theta}) \leq \exp(-\pi^2 \Delta \cdot \|\bm{\theta}\|_2^2)$.
\item[(ii)] For $\|\bm{\theta}\|_{\infty} \leq \frac{1}{4}$ one has $\hat{\bm{R}}(\bm{\theta}) \geq \exp(-\pi^2  \Delta \cdot \|\bm{\theta}\|_2^2 - 20 \Delta \cdot \|\bm{\theta}\|_2^4)$.
\item[(iii)] For $\|\bm{\theta}\|_{\infty} \leq \frac{1}{8}$ and $\bm{s} \in \{ -\frac{1}{2},0,\frac{1}{2} \}^m$, one has 
\[
  \frac{\hat{\bm{R}}(\bm{\theta}+\bm{s})}{\hat{\bm{R}}(\bm{\theta})} \leq 
 \prod_{i \in \textrm{supp}(\bm{s})} \big(32\theta_i^2\big)^{\Delta}
\]
\end{enumerate}
\end{lemma}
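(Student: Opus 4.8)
The plan is to reduce all three parts to the one-dimensional estimates \eqref{eq:OneDimRhatUpperBound} and \eqref{eq:OneDimRhatLowerBound}, exploiting that $\bm{R} \sim \pazocal{R}(\Delta)^m$ has independent coordinates, so that $\hat{\bm{R}}(\bm{\theta}) = \prod_{i=1}^m \hat{R}(\theta_i)$, and that each factor $\hat{R}(\theta_i) = \big(\tfrac12 + \tfrac12\cos(2\pi\theta_i)\big)^{\Delta}$ is nonnegative (and strictly positive as soon as $|\theta_i| < \tfrac14$, hence certainly for $|\theta_i| \le \tfrac18$). Parts (i) and (ii) are then essentially immediate. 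For (i), I would apply \eqref{eq:OneDimRhatUpperBound} in each coordinate and multiply: when $\|\bm{\theta}\|_\infty \le \tfrac12$ every $|\theta_i| \le \tfrac12$, so $\hat{\bm{R}}(\bm{\theta}) = \prod_i \hat{R}(\theta_i) \le \prod_i \exp(-\pi^2\Delta\theta_i^2) = \exp(-\pi^2\Delta\|\bm{\theta}\|_2^2)$.

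For (ii), do the same with the lower bound \eqref{eq:OneDimRhatLowerBound}, which applies since $\|\bm{\theta}\|_\infty \le \tfrac14$: $\hat{\bm{R}}(\bm{\theta}) \ge \prod_i \exp(-\pi^2\Delta\theta_i^2 - 20\Delta\theta_i^4) = \exp\big(-\pi^2\Delta\sum_i\theta_i^2 - 20\Delta\sum_i\theta_i^4\big)$. The only extra ingredient is the elementary bound $\sum_i\theta_i^4 \le \big(\sum_i\theta_i^2\big)^2 = \|\bm{\theta}\|_2^4$ (expand the square and discard the nonnegative cross terms), which turns the quartic term into $20\Delta\|\bm{\theta}\|_2^4$ and gives the claimed inequality.

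The one part that requires an actual computation is (iii). I would again factor $\frac{\hat{\bm{R}}(\bm{\theta}+\bm{s})}{\hat{\bm{R}}(\bm{\theta})} = \prod_{i=1}^m \frac{\hat{R}(\theta_i+s_i)}{\hat{R}(\theta_i)}$, noting that $\|\bm{\theta}\|_\infty \le \tfrac18$ forces $\cos(\pi\theta_i) > 0$, so every denominator is positive and the quotient makes sense. For $i \notin \textrm{supp}(\bm{s})$ the factor is exactly $1$. For $i \in \textrm{supp}(\bm{s})$ we have $s_i = \pm\tfrac12$, and since $\hat{R}$ is even and $1$-periodic, $\hat{R}(\theta_i + s_i) = \hat{R}(\theta_i + \tfrac12)$; using the identities $\tfrac12 + \tfrac12\cos(2\pi\theta) = \cos^2(\pi\theta)$ and $\tfrac12 + \tfrac12\cos(2\pi\theta + \pi) = \sin^2(\pi\theta)$, this factor equals $\big(\tan^2(\pi\theta_i)\big)^{\Delta}$. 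It then remains to verify the scalar inequality $\tan^2(\pi\theta) \le 32\,\theta^2$ for $|\theta| \le \tfrac18$: combine $\sin^2(\pi\theta) \le \pi^2\theta^2$ with $\cos^2(\pi\theta) \ge \cos^2(\pi/8) = \tfrac{2+\sqrt2}{4}$ (monotonicity of $\cos$ on $[0,\tfrac{\pi}{2}]$) to get $\tan^2(\pi\theta) \le \pi^2(4-2\sqrt2)\,\theta^2 < 32\,\theta^2$, with a comfortable margin. Raising to the power $\Delta$ and multiplying over $i \in \textrm{supp}(\bm{s})$ yields the stated bound. The main obstacle here is only bookkeeping: being careful that each one-dimensional estimate is invoked within its stated range of $\theta_i$ and that all denominators in (iii) remain strictly positive; there is no conceptual difficulty beyond pinning down the trigonometric constant $32$.
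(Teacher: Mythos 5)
Your proposal is correct and follows essentially the same route as the paper: factor over the independent coordinates, apply the one-dimensional bounds \eqref{eq:OneDimRhatUpperBound} and \eqref{eq:OneDimRhatLowerBound} for (i)--(ii), and for (iii) bound the per-coordinate ratio $\hat{R}(\theta_i+\tfrac12)/\hat{R}(\theta_i)$ (the paper bounds numerator and denominator by $16\theta_i^2$ and $\tfrac12$ respectively, while you use the exact identity $\tan^{2\Delta}(\pi\theta_i)$ and then bound it, even obtaining a sharper constant than $32$). The explicit step $\sum_i\theta_i^4 \le \|\bm{\theta}\|_2^4$ in (ii) is a detail the paper leaves implicit, and you supply it correctly.
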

\begin{proof}
Using \eqref{eq:OneDimRhatUpperBound} and the fact that the coordinates of $\bm{R}$ are chosen independently we get
\[
  \hat{\bm{R}}(\bm{\theta}) = \E\Big[\exp\Big(2\pi i \sum_{i=1}^m \theta_iR_i\Big)\Big] 
\stackrel{\textrm{independence}}{=} \prod_{i=1}^m \hat{R}_i(\theta_i)
\stackrel{\eqref{eq:OneDimRhatUpperBound}}{\leq} \exp\left(-\pi^2 \Delta \cdot \|\bm{\theta}\|_2^2\right)
\]
for $\|\bm{\theta}\|_{\infty} \leq \frac{1}{2}$, where $R_i \sim \pazocal{R}(\Delta)$ is the one 1-dimensional distribution.
The lower bound in $(ii)$ follows along the same lines using \eqref{eq:OneDimRhatLowerBound} instead. 

To show $(iii)$, consider the function $g(z) := \frac{1}{2}+\frac{1}{2}\cos(2\pi z)$.
In particular for all $|z| \leq \frac{1}{8}$ one has $g(z) \geq 1-16z^2 \geq \frac{1}{2}$
and $g(\frac{1}{2}+z) \leq 16z^2$ as well as $g(-\frac{1}{2}+z) \leq 16z^2$. Then
\[
  \frac{\hat{\bm{R}}(\bm{\theta}+\bm{s})}{\hat{\bm{R}}(\bm{\theta})} = \prod_{i \in \textrm{supp}(\bm{s})} \frac{g(s_i+\theta_i)^{\Delta}}{g(s_i)^{\Delta}} \leq \prod_{i \in \textrm{supp}(\bm{s})}\big(32\theta_i^2\big)^{\Delta}
  \]
  
\end{proof}

\section{The Fourier transform close to the origin}\label{sec:proofoverview}


In this section we work toward estimating the integral $\int_{\|\bm{\theta}\|_2 \leq r} \hat{\bm{D}}(\bm{\theta}) d\bm{\theta}$ for suitable small radius $r$. 
We begin with obtaining an explicit formula for the Fourier coefficients:
\begin{lemma}\label{lem:fourierformulaForD}
For any $\bm{\theta} \in \setR^m$ one has 
\[
\hat{\bm{D}}(\bm{\theta}) 
= \prod_{j=1}^n \cos\big(2\pi \left<\bm{A}^j,\bm{\theta}\right>\big) 
\]
\end{lemma}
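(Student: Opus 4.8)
The plan is a direct computation from the definition of the Fourier transform, using only the independence of the coordinates of the random coloring $\bm{x}$. First I would write $\bm{A}^j \in \{0,1\}^m$ for the $j$-th column of $\bm{A}$, so that $\bm{D} = \bm{A}\bm{x} = \sum_{j=1}^n x_j \bm{A}^j$. Consequently $\langle \bm{D}, \bm{\theta}\rangle = \sum_{j=1}^n x_j \langle \bm{A}^j, \bm{\theta}\rangle$, and plugging into the definition gives
\[
  \hat{\bm{D}}(\bm{\theta}) = \E\Big[\exp\Big(2\pi i \sum_{j=1}^n x_j \langle \bm{A}^j,\bm{\theta}\rangle\Big)\Big].
\]

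Next I would exploit that the $x_j$ are mutually independent to factor the expectation as $\prod_{j=1}^n \E\big[\exp(2\pi i x_j \langle \bm{A}^j,\bm{\theta}\rangle)\big]$. For each fixed $j$, since $x_j$ is uniform on $\{-1,1\}$, we have
\[
  \E\big[\exp(2\pi i x_j c)\big] = \tfrac{1}{2}e^{2\pi i c} + \tfrac{1}{2}e^{-2\pi i c} = \cos(2\pi c)
\]
with $c = \langle \bm{A}^j,\bm{\theta}\rangle$, using $e^{iz} = \cos(z) + i\sin(z)$. Multiplying these identities over $j$ yields the claimed product formula. There is no real obstacle here; the only thing to be mildly careful about is the notational convention that $\bm{A}^j$ denotes the $j$-th column (the row version would give a product over the $m$ sets, which is not what is wanted), and the fact that everything is a finite sum/product so no convergence issues arise.
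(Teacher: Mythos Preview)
Your proof is correct and follows essentially the same route as the paper: expand $\bm{D}=\sum_j x_j\bm{A}^j$, factor the expectation by independence of the $x_j$, and evaluate each factor via $\tfrac{1}{2}e^{2\pi i c}+\tfrac{1}{2}e^{-2\pi i c}=\cos(2\pi c)$.
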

\begin{proof}
We can write
\begin{eqnarray*}
 \hat{\bm{D}}(\bm{\theta}) &=&  \E\big[\exp(2\pi i \left<\bm{D},\bm{\theta}\right>)\big] = \E\Big[\exp\Big(2\pi i\Big<\sum_{j=1}^n x_j\bm{A}^j,\bm{\theta}\Big>\Big)\Big] \\
&\stackrel{\textrm{indep.}}{=}& \prod_{j=1}^n \E_{x_j \sim \{\pm 1\}}\big[\exp\big(2\pi i \cdot \left<\bm{A}^j,\bm{\theta}\right> \cdot x_j\big)\big] \\
&=& \prod_{j=1}^n \Big( \frac{1}{2} \exp\big(2\pi i \left<\bm{A}^j,\bm{\theta}\right>\big) + \frac{1}{2} \exp\big(-2\pi i \left<\bm{A}^j,\bm{\theta}\right>\big) \Big) \\
&=& \prod_{j=1}^n \cos\big(2\pi \left<\bm{A}^j,\bm{\theta}\right>\big)
\end{eqnarray*}
where we use again the elementary fact that $\exp(iz) = \cos(z) + i \cdot \sin(z)$ for $z \in \setR$.
\end{proof}
Notice that if $\bm{\theta}$ is too large, then it is possible that $|\left<\bm{A}^j,\bm{\theta}\right>| \geq \frac{1}{4}$ and the factor $\cos(2\pi \left<\bm{A}^j,\bm{\theta}\right>)$ might be negative. But for $\|\bm{\theta}\|_2 \leq O(\frac{1}{\sqrt{t}})$, we can show that $\hat{\bm{D}}(\bm{\theta})$ is positive and we will give a good estimate for it.

Let $\bm{I}_{m} \in \setR^{m \times m}$ be the identity matrix. We abbreviate 
\[
  \bm{\Sigma}[\bm{D}] := \bm{A}\bm{A}^T, \quad \bm{\Sigma}[\bm{R}] := \frac{\Delta}{2} \cdot \bm{I}_m \quad \textrm{and} \quad \bm{\Sigma}[\bm{X}] := \bm{\Sigma}[\bm{D}] + \bm{\Sigma}[\bm{R}].
\]
As we will see, these are the $m \times m$ \emph{covariance matrices} of $\bm{D}$, $\bm{R}$ and $\bm{X}$.
Note that $\bm{\Sigma}[\bm{D}]_{i,i'} = \left<\bm{A}_i,\bm{A}_{i'}\right> = |S_i \cap S_{i'}|$ for $i,i' \in [m]$. In particular $\E[\bm{\Sigma}[\bm{D}]_{i,i}] = \E[|S_i|] = np $ 
and for $i \neq i'$, $\E[\bm{\Sigma}[\bm{D}]_{i,i'}] = np^2$. 
Then coordinate-wise
\begin{equation}\label{eqn:CovarianceD}
 \E[\bm{\Sigma}[\bm{D}]] = (1-p) pn \cdot \bm{I}_m + p^2n \cdot \bm{1}\bm{1}^T
\end{equation}
where $\bm{1}\bm{1}^T \in \setR^{m \times m}$ is the rank-1 all-ones matrix.

\begin{lemma}\label{lem:DapproxGaussian}
With high probability over the choice of $\bm{A}$, for all $\|\bm{\theta}\|_2\le \frac{1}{16\sqrt{t}}$ we have\footnote{By $\hat{\bm{D}}(\bm{\theta}) = \exp(a \pm b)$ what we mean precisely is that there is $\beta(\bm{\theta}) \in [-b,b]$ with $\hat{\bm{D}}(\bm{\theta}) = \exp(a + \beta(\bm{\theta}))$.}
\[
 \hat{\bm{D}}(\bm{\theta}) = \exp\Big(-2\pi^2 \bm{\theta}^T\bm{\Sigma}[\bm{D}]\bm{\theta} \pm O(nt^2\|\bm{\theta}\|_2^4) \Big)
\]
\end{lemma}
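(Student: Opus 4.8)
The plan is to take logarithms in the product formula of Lemma~\ref{lem:fourierformulaForD} and expand each factor by its Taylor series. The only randomness we need to control is a uniform bound on the column norms of $\bm{A}$. Since $\|\bm{A}^j\|_2^2 = \textrm{freq}(j)$ is a sum of $m$ independent Bernoulli($p$) variables with mean $pm = t$, a Chernoff bound gives $\Pr[\|\bm{A}^j\|_2^2 > \tfrac32 t] \le \exp(-\Omega(t))$; using $t \ge C\log n$ and a union bound over the $n$ columns, with probability $1 - n^{-\Omega(C)}$ we have $\|\bm{A}^j\|_2^2 \le \tfrac32 t$ for every $j \in [n]$. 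We condition on this event, which does not depend on $\bm{\theta}$, so that the conclusion below holds simultaneously for all $\bm{\theta}$ in the stated ball.

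On this event, for $\|\bm{\theta}\|_2 \le \frac{1}{16\sqrt{t}}$, Cauchy--Schwarz gives $|\langle\bm{A}^j,\bm{\theta}\rangle| \le \|\bm{A}^j\|_2\|\bm{\theta}\|_2 \le \sqrt{\tfrac32 t}\cdot\frac{1}{16\sqrt{t}} < \tfrac18$ for every $j$. Since $2\pi\cdot\tfrac18 < \tfrac\pi2$, each factor $\cos(2\pi\langle\bm{A}^j,\bm{\theta}\rangle)$ is strictly positive, so $\hat{\bm{D}}(\bm{\theta}) > 0$ and $\ln\hat{\bm{D}}(\bm{\theta}) = \sum_{j=1}^n \ln\cos(2\pi\langle\bm{A}^j,\bm{\theta}\rangle)$. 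From the expansion $\ln\cos z = -\tfrac12 z^2 - \tfrac{1}{12}z^4 - \cdots$, valid for $|z| < \tfrac\pi2$, one gets a uniform estimate $\ln\cos(2\pi u) = -2\pi^2 u^2 \pm O(u^4)$ for $|u| \le \tfrac18$. Summing over $j$,
\[
 \ln\hat{\bm{D}}(\bm{\theta}) = -2\pi^2\sum_{j=1}^n\langle\bm{A}^j,\bm{\theta}\rangle^2 \;\pm\; O(1)\cdot\sum_{j=1}^n\langle\bm{A}^j,\bm{\theta}\rangle^4 .
\]

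For the main term, $\sum_{j=1}^n\langle\bm{A}^j,\bm{\theta}\rangle^2 = \bm{\theta}^T\big(\sum_{j=1}^n\bm{A}^j(\bm{A}^j)^T\big)\bm{\theta} = \bm{\theta}^T\bm{A}\bm{A}^T\bm{\theta} = \bm{\theta}^T\bm{\Sigma}[\bm{D}]\bm{\theta}$. For the error term it suffices to be crude: $\sum_{j=1}^n\langle\bm{A}^j,\bm{\theta}\rangle^4 \le \big(\max_j\langle\bm{A}^j,\bm{\theta}\rangle^2\big)\cdot\sum_{j=1}^n\langle\bm{A}^j,\bm{\theta}\rangle^2 \le \tfrac32 t\|\bm{\theta}\|_2^2 \cdot \tfrac32 tn\|\bm{\theta}\|_2^2 = O(nt^2\|\bm{\theta}\|_2^4)$, using $\langle\bm{A}^j,\bm{\theta}\rangle^2 \le \tfrac32 t\|\bm{\theta}\|_2^2$ in both factors. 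Exponentiating yields $\hat{\bm{D}}(\bm{\theta}) = \exp\!\big(-2\pi^2\bm{\theta}^T\bm{\Sigma}[\bm{D}]\bm{\theta} \pm O(nt^2\|\bm{\theta}\|_2^4)\big)$, as claimed.

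I do not anticipate a real obstacle here; the one point requiring attention is that the logarithm and the quartic Taylor bound are only legitimate when every $\langle\bm{A}^j,\bm{\theta}\rangle$ is bounded away from $\tfrac14$, which is exactly what the hypothesis $\|\bm{\theta}\|_2 \le \frac{1}{16\sqrt t}$ together with the uniform frequency bound guarantees. Note in particular that no matrix concentration inequality is needed: the crude per-column bound on $\langle\bm{A}^j,\bm{\theta}\rangle^2$ already suffices for the error term, and the main term is an exact algebraic identity.
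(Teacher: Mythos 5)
Your proof is correct and follows essentially the same route as the paper's: a Chernoff bound plus union bound to control all column norms $\|\bm{A}^j\|_2 = O(\sqrt{t})$, Cauchy--Schwarz to confine $|\langle\bm{A}^j,\bm{\theta}\rangle|$ to $[0,\tfrac18]$, the Taylor expansion of $\ln\cos$ for the quadratic-plus-quartic estimate, the exact identity $\sum_j\langle\bm{A}^j,\bm{\theta}\rangle^2 = \bm{\theta}^T\bm{A}\bm{A}^T\bm{\theta}$, and a crude $O(nt^2\|\bm{\theta}\|_2^4)$ bound on the quartic error. The only differences are immaterial constants.
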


\begin{proof}
Since $t \geq C\log(n)$, we know via a standard Chernov bound argument that no element
will be in more than $4t$ sets, which means that $\|\bm{A}^j\|_2\le 2\sqrt{t}$ for 
each column $j \in [n]$. Then for any $\|\bm{\theta}\|_2 \leq \frac{1}{16\sqrt{t}}$
we can use the Cauchy-Schwarz inequality to get  $|\langle \bm{A}^j,\bm{\theta}\rangle|\le \|\bm{A}^j\|_2 \cdot \|\bm{\theta}\|_2 \leq \frac{1}{8}$.  
Using a similar Taylor expansion as before one can show that 
$\exp(-\frac{1}{2}z^2-z^4) \leq \cos(z) \leq \exp(-\frac{1}{2}z^2+z^4)$ for all $|z| \leq \frac{1}{8}$.
Therefore we can express
\begin{eqnarray*}
\hat{\bm{D}}(\bm{\theta}) &\stackrel{\textrm{Lem.\ref{lem:fourierformulaForD}}}{=}& \prod_{j=1}^n \cos(2\pi\left<\bm{A}^j,\bm{\theta}\right>)  \\
 &=& \exp\Big(-2\pi^2 \sum_{j=1}^n\left<\bm{A}^j,\bm{\theta}\right>^2 \pm (2\pi)^4 \sum_{j=1}^n \left<\bm{A}^j,\bm{\theta}\right>^4\Big) \\
&=& \exp\Big(-2\pi^2 \bm{\theta}^T\underbrace{(\bm{A}\bm{A}^T)}_{=\bm{\Sigma}[\bm{D}]}\bm{\theta} \pm O(1)  nt^2\|\bm{\theta}\|_2^4\Big) 
\end{eqnarray*}
as claimed. Here we use in the last step that $\left<\bm{A}^j,\bm{\theta}\right>^4 \leq \|\bm{A}^j\|_2^4 \cdot \|\bm{\theta}\|_2^4 \leq (2\sqrt{t})^4 \cdot \|\bm{\theta}\|_2^4$.
\end{proof}

\subsection{Comparison with a Gaussian distribution}

It will be instructive to compare $\hat{\bm{D}}(\bm{\theta})$ to the Fourier transform 
of an appropriately scaled \emph{Gaussian} and separately obtain an integral for the Gaussian.
So, consider an $m$-dimensional Gaussian $\bm{Y}$ with expectation $\bm{0}$ and covariance matrix $\bm{\bm{\Sigma}}[\bm{Y}]$. 
Then the \emph{density} function is well-known to be
\[
 f_{\bm{Y}}(\bm{x}) = \frac{1}{(2\pi)^{m/2}\sqrt{\det(\bm{\Sigma}[\bm{Y}])}} \exp\Big(-\frac{1}{2} \bm{x}^T \bm{\Sigma}[\bm{Y}]^{-1}\bm{x}\Big)
\] 
and the Fourier transform is 
$\hat{\bm{Y}}(\bm{\theta}) = \exp(-2\pi^2 \bm{\theta}^T\bm{\Sigma}[\bm{Y}]\bm{\theta})$.

\begin{lemma}\label{lem:gaussianbound} Consider a Gaussian $\bm{Y}$ with covariance matrix $\bm{\Sigma}[\bm{Y}]$. 
Then for $\lambda \geq 0$,
\[ \int_{\bm{\theta}: \bm{\theta}^T \bm{\Sigma}[\bm{Y}] \bm{\theta} \leq \frac{1}{4\pi^2}(\sqrt{m}+\lambda)^2} \hat{\bm{Y}}(\bm{\theta}) d\bm{\theta} \ge f_{\bm{Y}}(\bm{0}) \cdot (1-2e^{-\lambda^2/2}).\]
In particular, if $\bm{\Sigma}[\bm{Y}]=r\bm{I}_m$, then $\int_{\|\bm{\theta}\|_2\leq \frac{1}{\pi} \sqrt{m/r}} \hat{\bm{Y}}(\bm{\theta}) d\bm{\theta} \geq \frac{1}{2} \cdot (2\pi r)^{-m/2}$.
\end{lemma}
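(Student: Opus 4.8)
The plan is to exploit the fact that Gaussians are (up to scaling) fixed points of the Fourier transform: the integrand $\hat{\bm{Y}}(\bm{\theta}) = \exp(-2\pi^2\bm{\theta}^T\bm{\Sigma}[\bm{Y}]\bm{\theta})$ is, up to a normalizing scalar, itself a Gaussian density in the variable $\bm{\theta}$, so integrating it over a region amounts to computing a Gaussian probability. Concretely, assuming $\bm{\Sigma}[\bm{Y}]$ is positive definite (as it will be in our application, where $\bm{\Sigma}[\bm{X}] = \bm{A}\bm{A}^T + \frac{\Delta}{2}\bm{I}_m$ is positive definite), let $\bm{Z}$ be the centered Gaussian with covariance $\bm{\Sigma}[\bm{Z}] := \frac{1}{4\pi^2}\bm{\Sigma}[\bm{Y}]^{-1}$. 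Comparing its density with $\hat{\bm{Y}}$ and using the elementary identity $\int_{\setR^m}\exp(-\frac{1}{2}\bm{\theta}^T M\bm{\theta})\,d\bm{\theta} = (2\pi)^{m/2}/\sqrt{\det M}$ with $M = 4\pi^2\bm{\Sigma}[\bm{Y}]$, one checks that $\hat{\bm{Y}}(\bm{\theta}) = f_{\bm{Y}}(\bm{0})\cdot f_{\bm{Z}}(\bm{\theta})$ for every $\bm{\theta}$; in particular $\int_T \hat{\bm{Y}}(\bm{\theta})\,d\bm{\theta} = f_{\bm{Y}}(\bm{0})\cdot\Pr[\bm{Z}\in T]$ for every measurable $T$.

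Next I would translate the integration domain into an event for $\bm{Z}$. Since $\bm{\Sigma}[\bm{Z}]^{-1} = 4\pi^2\bm{\Sigma}[\bm{Y}]$, the condition $\bm{\theta}^T\bm{\Sigma}[\bm{Y}]\bm{\theta}\le\frac{1}{4\pi^2}(\sqrt{m}+\lambda)^2$ is precisely $\bm{\theta}^T\bm{\Sigma}[\bm{Z}]^{-1}\bm{\theta}\le(\sqrt{m}+\lambda)^2$. Writing $\bm{Z} = \bm{\Sigma}[\bm{Z}]^{1/2}\bm{g}$ with $\bm{g}$ a standard $m$-dimensional Gaussian gives $\bm{Z}^T\bm{\Sigma}[\bm{Z}]^{-1}\bm{Z} = \|\bm{g}\|_2^2$, so the probability in question equals $\Pr[\|\bm{g}\|_2\le\sqrt{m}+\lambda]$. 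Now I would invoke concentration of the Euclidean norm of a Gaussian vector: the map $\bm{g}\mapsto\|\bm{g}\|_2$ is $1$-Lipschitz and $\E[\|\bm{g}\|_2]\le\sqrt{\E[\|\bm{g}\|_2^2]} = \sqrt{m}$, so the standard Gaussian Lipschitz-concentration inequality yields $\Pr[\|\bm{g}\|_2>\sqrt{m}+\lambda]\le\Pr[\|\bm{g}\|_2>\E[\|\bm{g}\|_2]+\lambda]\le e^{-\lambda^2/2}\le 2e^{-\lambda^2/2}$, which is the asserted lower bound on the integral (with a factor of $2$ to spare).

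For the ``in particular'' statement, set $\bm{\Sigma}[\bm{Y}] = r\bm{I}_m$, so the level set $\bm{\theta}^T\bm{\Sigma}[\bm{Y}]\bm{\theta}\le\frac{1}{4\pi^2}(\sqrt{m}+\lambda)^2$ is the Euclidean ball $\|\bm{\theta}\|_2\le\frac{\sqrt{m}+\lambda}{2\pi\sqrt{r}}$; taking $\lambda = \sqrt{m}$ makes this radius exactly $\frac{1}{\pi}\sqrt{m/r}$. Substituting $f_{\bm{Y}}(\bm{0}) = (2\pi)^{-m/2}(\det(r\bm{I}_m))^{-1/2} = (2\pi r)^{-m/2}$ and $1-2e^{-\lambda^2/2} = 1-2e^{-m/2}\ge\frac{1}{2}$ (valid for $m\ge 3$) gives the claim.

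I do not expect a serious obstacle here: the real content is the one-line observation that the Fourier transform of a centered Gaussian is again a centered Gaussian density, which converts the integral into a tail probability; the rest is bookkeeping of the normalizing constants $(2\pi)^{\pm m/2}$ and $\det(\cdot)^{\pm 1/2}$, together with the single off-the-shelf ingredient that $\|\bm{g}\|_2$ concentrates around $\sqrt{m}$ (equivalently, a $\chi^2_m$ tail bound).
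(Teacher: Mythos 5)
Your proposal is correct and is essentially the paper's own argument: both recognize that $\hat{\bm{Y}}(\bm{\theta})=f_{\bm{Y}}(\bm{0})$ times a Gaussian density (the paper via an explicit change of variables $\bm{z}=2\pi\bm{\Sigma}[\bm{Y}]^{1/2}\bm{\theta}$), reduce the integral to $\Pr[\|\bm{g}\|_2\le\sqrt{m}+\lambda]$ for a standard Gaussian $\bm{g}$, and invoke Lipschitz concentration of the norm together with $\E[\|\bm{g}\|_2]\le\sqrt{m}$. Your side remarks (positive definiteness of $\bm{\Sigma}[\bm{Y}]$, the restriction to $m\ge 3$ for the final $\tfrac12$) are harmless and consistent with how the lemma is used.
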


\begin{proof}
Let $\bm{G} \sim N^m(0,1)$ be the standard Gaussian with covariance matrix $\bm{I}_m$. 
Then $\E[\|\bm{G}\|_2^2] = m$ and by Jensen's inequality $\E[\|\bm{G}\|_2] \leq \sqrt{m}$.
Observe that $\| \cdot \|_2$ is a \emph{1-Lipschitz} function, so in particular $\Pr[\|\bm{G}\|_2 > \sqrt{m} + \lambda] \leq 2e^{-\lambda^2/2}$ using the inequality of Sudakov-Tsirelson\footnote{Arguably, this is overkill. For our purpose it would also suffice to apply Markov's inequality to get $\Pr[\|\bm{G}\|_2 > 2\sqrt{m}] \leq \frac{1}{2}$.}. 
Then we estimate
\begin{eqnarray*}
 & & \int_{\bm{\theta}: \bm{\theta}^T \bm{\Sigma}[\bm{Y}] \bm{\theta} \leq \frac{1}{4\pi^2}(\sqrt{m}+\lambda)^2} \hat{\bm{Y}}(\bm{\theta}) d\bm{\theta} \\
 &=& \int_{\bm{\theta}: \|2\pi \bm{\Sigma}[\bm{Y}]^{1/2}\bm{\theta}\|_2 \leq \sqrt{m} + \lambda} \exp\Big(-\frac{1}{2} \cdot \|2\pi\bm{\Sigma}[\bm{Y}]^{1/2}\bm{\theta}\|_2^2\Big)d\bm{\theta} \\
 &\stackrel{\textrm{transformation}}{=}& \frac{(2\pi)^{m/2}}{\det(2\pi \bm{\Sigma}[\bm{Y}]^{1/2})} \frac{1}{(2\pi)^{m/2}} \int_{\bm{z}: \|\bm{z}\|_2 \leq \sqrt{m}+\lambda} \exp\Big(-\frac{1}{2} \|\bm{z}\|_2^2 \Big) d\bm{z} \\
 &=& \frac{1}{(2\pi)^{m/2} \sqrt{\det(\bm{\Sigma}[\bm{Y}])}} \Pr_{\bm{G} \sim N^m(0,1)}[\|\bm{G}\|_2 \leq \sqrt{m} + \lambda] \\
&\geq& f_{\bm{Y}}(\bm{0}) \cdot (1-2e^{-\lambda^2/2})
\end{eqnarray*}
Here we use an integral transformation in the form $\int_{\bm{x} \in \setR^m} f(\bm{B}\bm{x}) d\bm{x} = \frac{1}{\det(\bm{B})} \int_{\bm{x} \in \setR^m} f(\bm{x}) d\bm{x}$.
For the second claim we set $\lambda := \sqrt{m}$. Then for $\bm{\Sigma}[\bm{Y}] = r \cdot \bm{I}_m$ one has 
$\bm{\theta}^T\bm{\Sigma}[\bm{Y}]\bm{\theta} \leq \frac{1}{4\pi^2} (2\sqrt{m})^2 \Leftrightarrow \|\bm{\theta}\|_2 \leq \frac{1}{\pi} \cdot \sqrt{\frac{m}{r}}$ and moreover $f_Y(\bm{0}) = \frac{1}{(2\pi)^{m/2}} \frac{1}{\sqrt{\det(r \cdot \bm{I}_m)}} = (2\pi r)^{-m/2}$.
\end{proof}

\begin{lemma}\label{lem:lowerbound}
With high probability over the choice of $\bm{A}$, we have
$$\int_{\|\bm{\theta}\|_2\le \frac{1}{16\sqrt{t}}}\hat{\bm{X}}(\bm{\theta})d\bm{\theta} \ge n^{-\Theta(m)}.$$
\end{lemma}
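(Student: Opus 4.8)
\emph{Proof plan.}
The plan is to throw away everything except a tiny ball around the origin. Write $\hat{\bm{X}}(\bm{\theta})=\hat{\bm{D}}(\bm{\theta})\,\hat{\bm{R}}(\bm{\theta})$ and observe that on the whole ball $\pazocal{B}^2(\bm{0},\tfrac{1}{16\sqrt t})$ this is nonnegative: $\hat{\bm{D}}(\bm{\theta})>0$ there by Lemma~\ref{lem:DapproxGaussian}, while $\hat{\bm{R}}(\bm{\theta})=\prod_{i=1}^{m}\bigl(\tfrac12+\tfrac12\cos(2\pi\theta_i)\bigr)^{\Delta}\ge 0$ for every $\bm{\theta}$. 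Hence it suffices to lower bound the integral over any sub-ball, and I will use $B:=\pazocal{B}^2\bigl(\bm{0},\tfrac{1}{\pi\sqrt n}\bigr)$, which is contained in $\pazocal{B}^2(\bm{0},\tfrac{1}{16\sqrt t})$ since $n\ge Cm^2\log m$ forces $n\ge\tfrac{256}{\pi^2}t$. On $B$ Lemma~\ref{lem:PropertiesOfMultiDimR}(ii) gives $\hat{\bm{R}}(\bm{\theta})\ge\exp(-\pi^2\Delta\|\bm{\theta}\|_2^2-20\Delta\|\bm{\theta}\|_2^4)\ge\tfrac12$, so the task reduces to showing $\int_B\hat{\bm{D}}(\bm{\theta})\,d\bm{\theta}\ge n^{-\Theta(m)}$.

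For that step I compare $\hat{\bm{D}}$ against an isotropic Gaussian of deliberately over-estimated variance. Using only the crude deterministic bound $\bm{A}\bm{A}^T\preceq\|\bm{A}\|_F^2\,\bm{I}_m\preceq (mn)\bm{I}_m$, together with the fact that on $B$ the quartic error satisfies $nt^2\|\bm{\theta}\|_2^4\le t^2/(\pi^4 n)\le m^2/n\le 1$, Lemma~\ref{lem:DapproxGaussian} yields, for all $\bm{\theta}\in B$,
\[
  \hat{\bm{D}}(\bm{\theta})=\exp\Bigl(-2\pi^2\,\bm{\theta}^T(\bm{A}\bm{A}^T)\bm{\theta}\pm O\bigl(nt^2\|\bm{\theta}\|_2^4\bigr)\Bigr)\ \ge\ e^{-1}\exp\bigl(-2\pi^2(mn)\|\bm{\theta}\|_2^2\bigr)=e^{-1}\hat{\bm{Y}}(\bm{\theta}),
\]
where $\bm{Y}$ is the Gaussian with covariance $\bm{\Sigma}[\bm{Y}]=(mn)\bm{I}_m$. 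Since $B$ is precisely the ball $\|\bm{\theta}\|_2\le\tfrac{1}{\pi}\sqrt{m/(mn)}$, the second part of Lemma~\ref{lem:gaussianbound} applies and gives $\int_B\hat{\bm{Y}}(\bm{\theta})\,d\bm{\theta}\ge\tfrac12(2\pi mn)^{-m/2}$. Consequently $\int_B\hat{\bm{D}}(\bm{\theta})\,d\bm{\theta}\ge\tfrac12 e^{-1}(2\pi mn)^{-m/2}\ge n^{-\Theta(m)}$ (using $mn\le n^2$), and combining with the first paragraph, $\int_{\|\bm{\theta}\|_2\le\frac{1}{16\sqrt t}}\hat{\bm{X}}(\bm{\theta})\,d\bm{\theta}\ge\int_B\hat{\bm{X}}(\bm{\theta})\,d\bm{\theta}\ge\tfrac12\int_B\hat{\bm{D}}(\bm{\theta})\,d\bm{\theta}\ge n^{-\Theta(m)}$, all on the high-probability event of Lemma~\ref{lem:DapproxGaussian}.

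There is no genuine obstacle here; the single delicate point is the choice of radius of $B$. It must be small enough, on one hand, to stay inside the validity range $\|\bm{\theta}\|_2\le\tfrac{1}{16\sqrt t}$ of Lemma~\ref{lem:DapproxGaussian} and, more importantly, small enough that $\lambda_{\max}(\bm{A}\bm{A}^T)\cdot r^2=O(m)$ rather than $\Theta(n)$ --- otherwise $\hat{\bm{D}}$ could be as small as $e^{-\Theta(n)}$ on the ball and all the positive mass would be destroyed --- yet, on the other hand, large enough that $\vol(B)\ge n^{-\Theta(m)}$. The radius $r=\Theta(1/\sqrt n)$ is the sweet spot, and it is essentially here that the hypothesis $n\gg m^2$ is used. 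One may also bypass Lemma~\ref{lem:gaussianbound} entirely and instead multiply the pointwise bound $\hat{\bm{X}}(\bm{\theta})\ge e^{-O(m)}$ valid on $B$ by $\vol(B)=\tfrac{\pi^{m/2}}{\Gamma(m/2+1)}(\pi\sqrt n)^{-m}\ge n^{-\Theta(m)}$, which is the same estimate in a more hands-on form.
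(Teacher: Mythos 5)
Your proposal is correct and follows essentially the same route as the paper: restrict to the ball of radius $\Theta(1/\sqrt{n})$ where $\hat{\bm{R}}\geq\frac12$, use the crude deterministic bound $\bm{A}\bm{A}^T\preceq mn\cdot\bm{I}_m$ to compare $\hat{\bm{D}}$ with the Gaussian of covariance $mn\cdot\bm{I}_m$, and invoke Lemma~\ref{lem:gaussianbound}. The only cosmetic difference is that you absorb the quartic error into a multiplicative constant rather than into the covariance matrix, which changes nothing.
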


\begin{proof} 
We will actually integrate over a radius that is quite a bit smaller than $\frac{1}{16\sqrt{t}}$, but note that $\hat{\bm{X}}(\bm{\theta}) > 0$ for all $\|\bm{\theta}\|_2 \leq \frac{1}{16\sqrt{t}}$. In fact, 
we will lower bound this integral by comparison with the Gaussian with covariance matrix
$r \cdot \bm{I}_m$ for parameter $r := mn$. Then by Lemma~\ref{lem:gaussianbound} it suffices to integrate
over $\bm{\theta}$'s with $\|\bm{\theta}\|_2 \leq \frac{1}{\pi} \sqrt{\frac{m}{r}} = \frac{1}{\pi} \frac{1}{\sqrt{n}}$. 
Note that even deterministically 
$\bm{\theta}^T\bm{\Sigma}[\bm{D}]\bm{\theta} \leq \frac{1}{2}nm \cdot \|\bm{\theta}\|_2^2$ for all $\bm{\theta} \in \setR^m$ (using (\ref{eqn:CovarianceD}) with $p\le \frac{1}{2}$).
With the assumptions $\|\bm{\theta}\|_2 \leq \frac{1}{2\pi\sqrt{n}}$ and $\|\bm{A}^j\|_2 \leq 2\sqrt{t}$, and $\Delta\leq \frac{n}{8}$
we can write
\begin{eqnarray*}
  \hat{\bm{X}}(\bm{\theta}) &=& \hat{\bm{R}}(\bm{\theta}) \cdot \hat{\bm{D}}(\bm{\theta}) \geq \underbrace{\exp\Big(-(\pi^2+20) \Delta \|\bm{\theta}\|_2^2\Big)}_{\geq 1/2} \cdot \exp\Big(-2\pi^2 \underbrace{ \bm{\theta}^T\bm{\Sigma}[\bm{D}]\bm{\theta}}_{\leq nm \|\bm{\theta}\|_2^2/2} - ct^2n \underbrace{\|\bm{\theta}\|_2^2}_{\leq 1/n}\|\bm{\theta}\|_2^2 \Big) \\
 &\geq& \frac{1}{2} \exp\Big(- 2\pi^2 \bm{\theta}^T\Big( \underbrace{(\frac{1}{2}mn+\frac{ct^2}{2\pi^2})}_{ \leq mn} \bm{I}_m\Big) \bm{\theta} \Big) \geq \frac{1}{2} \hat{\bm{Y}}(\bm{\theta}).
\end{eqnarray*}
Here we use the lower bound on $\hat{\bm{R}}(\bm{\theta})$ from Lemma~\ref{lem:PropertiesOfMultiDimR}(ii)
and the estimate on $\hat{\bm{D}}(\bm{\theta})$ from Lemma~\ref{lem:DapproxGaussian}.
By Lemma \ref{lem:gaussianbound}, we can then simply use that
\[
\int_{\|\bm{\theta}\|_2\le \frac{1}{\pi \sqrt{n}}}\hat{\bm{Y}}(\bm{\theta})d\bm{\theta}\ge (2\pi nm)^{-m/2}.
\]
\end{proof}
The reader might have observed that the bound we used was terribly wasteful. Effectively we 
have upper bounded $\bm{\Sigma}[\bm{X}] \preceq nm \cdot \bm{I}_m$. 
Instead one could have 
used powerful \emph{matrix Chernov bounds} and argue that $\bm{\Sigma}[\bm{X}] \preceq (1 + \varepsilon) \E[\bm{\Sigma}[\bm{X}]]$ as long as $t \gg \frac{\log(n)}{\varepsilon^2}$. 
Then one could have done a more careful integration. However, even such a careful treatment would only
affect the constant in the exponent of the right hand side quantity $n^{-\Theta(m)}$. For that reason we skip such a more careful estimate.

\section{Dominance of the central spike}

Recall that for half-integral points $\bm{s} \in \Lambda := \{ -\frac{1}{2},0,\frac{1}{2} \}^m$ and $\bm{\theta} \approx \bm{0}$ we have
$|\hat{\bm{D}}(\bm{s}+\bm{\theta})| \approx 1$ while we lack control of the sign for all points $\bm{s} \neq \bm{0}$.
But we know that $\hat{\bm{D}}(\bm{s} + \bm{\theta}) = \hat{\bm{D}}(\bm{\theta})$ and $\hat{\bm{X}}(\bm{\theta}) = \hat{\bm{D}}(\bm{\theta}) \cdot \hat{\bm{R}}(\bm{\theta})$. So the
 crucial argument will be that, if weighted with the fast decaying Fourier 
coefficient $\hat{\bm{R}}(\bm{\theta})$, the ``central spike'' around $\bm{0}$
has a larger contribution than all the $3^{m}-1$ other spikes. 

\begin{lemma} 
For a small enough constant $c>0$, let $\Delta =1$ and draw $\bm{R} \sim \pazocal{R}(\Delta)^m$. 
Then for $\|\bm{\theta}\|_{2} \leq c$ one has  \[|\hat{\bm{R}}(\bm{\theta})|>2\sum_{\bm{s}\in \{-\frac{1}{2},0,\frac{1}{2}\}^m\setminus\{\bm{0}\}}|\hat{\bm{R}}(\bm{\theta}+\bm{s})|.\]
\end{lemma}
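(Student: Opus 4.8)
The plan is to reduce the exponentially many competing ``spikes'' to a single scalar estimate by exploiting the fact that $\hat{\bm{R}}$ factorizes over coordinates. First I would record two preliminary observations. The function $\hat{\bm{R}}$ is nonnegative everywhere: by the one‑dimensional formula preceding Lemma~\ref{lem:PropertiesOfMultiDimR}, $\hat{\bm{R}}(\bm{\theta}) = \prod_{i=1}^m\big(\tfrac12+\tfrac12\cos(2\pi\theta_i)\big)^{\Delta}$ and each factor lies in $[0,1]$, so every absolute value in the statement may be dropped. Moreover, if the constant satisfies $c\le\tfrac14$, then for $\|\bm{\theta}\|_2\le c$ Lemma~\ref{lem:PropertiesOfMultiDimR}(ii) gives $\hat{\bm{R}}(\bm{\theta})>0$, so the target inequality is equivalent, after dividing through by $\hat{\bm{R}}(\bm{\theta})$, to
\[
\sum_{\bm{s}\in\Lambda\setminus\{\bm{0}\}}\frac{\hat{\bm{R}}(\bm{\theta}+\bm{s})}{\hat{\bm{R}}(\bm{\theta})}<\frac12,\qquad \Lambda=\{-\tfrac12,0,\tfrac12\}^m .
\]

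Next, assuming in addition $c\le\tfrac18$ so that $\|\bm{\theta}\|_\infty\le\tfrac18$, I would apply Lemma~\ref{lem:PropertiesOfMultiDimR}(iii) with $\Delta=1$, which gives for each $\bm{s}\in\Lambda\setminus\{\bm{0}\}$ the bound $\hat{\bm{R}}(\bm{\theta}+\bm{s})/\hat{\bm{R}}(\bm{\theta})\le\prod_{i\in\operatorname{supp}(\bm{s})}32\theta_i^2$. The key step is that summing the right‑hand side over all of $\Lambda$ factorizes across coordinates: a coordinate contributes $1$ when $s_i=0$ and $32\theta_i^2$ for each of the two values $s_i=\pm\tfrac12$, so
\[
\sum_{\bm{s}\in\Lambda}\ \prod_{i\in\operatorname{supp}(\bm{s})}32\theta_i^2\;=\;\prod_{i=1}^m\bigl(1+64\theta_i^2\bigr)\;\le\;\exp\Bigl(64\sum_{i=1}^m\theta_i^2\Bigr)\;=\;\exp\bigl(64\|\bm{\theta}\|_2^2\bigr),
\]
using $1+u\le e^u$ termwise. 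Subtracting the $\bm{s}=\bm{0}$ term yields $\sum_{\bm{s}\in\Lambda\setminus\{\bm{0}\}}\prod_{i\in\operatorname{supp}(\bm{s})}32\theta_i^2\le\exp(64\|\bm{\theta}\|_2^2)-1$.

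Finally I would choose $c$ small enough that $\exp(64c^2)<\tfrac32$; concretely $c=\tfrac1{16}$ works, since then $64c^2=\tfrac14$ and $e^{1/4}<\tfrac32$, and this also satisfies the earlier requirements $c\le\tfrac18$ and $c\le\tfrac14$. Then for $\|\bm{\theta}\|_2\le c$,
\[
\sum_{\bm{s}\in\Lambda\setminus\{\bm{0}\}}\hat{\bm{R}}(\bm{\theta}+\bm{s})\;\le\;\hat{\bm{R}}(\bm{\theta})\bigl(\exp(64\|\bm{\theta}\|_2^2)-1\bigr)\;<\;\tfrac12\,\hat{\bm{R}}(\bm{\theta}),
\]
which rearranges to the claim. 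The argument is short, and I do not expect a genuine obstacle: the only point that matters is the factorization. There are $3^m-1$ competing terms, so any term‑by‑term or union‑bound estimate is hopeless; the proof works precisely because $\sum_{\bm{s}\in\Lambda}\prod_i 32\theta_i^2$ collapses to $\prod_i(1+64\theta_i^2)$, a quantity governed by $\|\bm{\theta}\|_2^2$ alone and hence bounded uniformly in $m$. Everything else is the elementary Taylor estimates already packaged into Lemma~\ref{lem:PropertiesOfMultiDimR}.
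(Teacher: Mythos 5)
Your proof is correct and follows essentially the same route as the paper's: apply Lemma~\ref{lem:PropertiesOfMultiDimR}(iii), collapse the sum over $\Lambda$ to the product $\prod_{i=1}^m(1+64\theta_i^2)-1\le e^{64\|\bm{\theta}\|_2^2}-1$, and take $c$ small. Your version is marginally more careful in spelling out the nonnegativity of $\hat{\bm{R}}$ and an explicit admissible value of $c$, but there is no substantive difference.
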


\begin{proof}

Using Lemma~\ref{lem:PropertiesOfMultiDimR} (iii) we obtain
 \begin{eqnarray*}
 \sum_{\bm{s}\in \Lambda\setminus\{\bm{0}\}}|\hat{\bm{R}}(\bm{\theta}+\bm{s})| 
 &\le& \hat{\bm{R}}(\bm{\theta})\cdot \sum_{\bm{s}\in\Lambda\setminus\{\bm{0}\}} \Big(\prod_{s_i\ne 0} 32\theta_i^2\Big) \\
 &=& \hat{\bm{R}}(\bm{\theta})\cdot \Big(\prod_{i=1}^m(64\theta_i^2+1)-1\Big) \\
 &\le& \hat{\bm{R}}(\bm{\theta})\cdot (e^{64\|\bm{\theta}\|_2^2}-1) \stackrel{\|\bm{\theta}\|_2^2 \leq \frac{1}{64}}{\leq} \hat{\bm{R}}(\bm{\theta}) \cdot 128\|\bm{\theta}\|_2^2.
 \end{eqnarray*}
Here the equality follows from expanding the product $\prod_{i=1}^m(32\theta_i^2+32\theta_i^2+1)$, which gives a corresponding term for every possible $\bm{s}$. For $\|\bm{\theta}\|_2$ small enough, the claim holds.

 
%
\end{proof}

Since $\bm{D}$ is $\Lambda$-periodic, this implies the same relation for the Fourier coefficients of $\bm{X} = \bm{D} + \bm{R}$:
\begin{lemma}\label{lem:halfintegralbound}
For $\Delta=1$ and $\|\bm{\theta}\|_{2} \leq c$ for a small enough $c>0$ one has
\[
   |\hat{\bm{X}}(\bm{\theta})| > 2\sum_{\bm{s} \in \Lambda \setminus \{\bm{0}\}} |\hat{\bm{X}}(\bm{\theta}+\bm{s})|.
\]
\end{lemma}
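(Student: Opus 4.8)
The plan is to reduce Lemma~\ref{lem:halfintegralbound} to the preceding lemma about $\hat{\bm R}$ by exploiting the $\Lambda$-periodicity of $|\hat{\bm D}|$. Recall that $\hat{\bm X}(\bm\theta) = \hat{\bm D}(\bm\theta)\cdot\hat{\bm R}(\bm\theta)$ and that for every $\bm s\in\Lambda$ we have $|\hat{\bm D}(\bm\theta+\bm s)| = |\hat{\bm D}(\bm\theta)|$ (this is the ``$\tfrac12\setZ^m$-periodicity'' of $|\hat{\bm D}|$ noted in the overview, coming from Lemma~\ref{lem:fourierformulaForD} together with the fact that shifting any coordinate of $\bm\theta$ by $\tfrac12$ multiplies each $\cos(2\pi\langle\bm A^j,\bm\theta\rangle)$ by $\pm1$). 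First I would restrict to $\|\bm\theta\|_2\le c$ with $c$ the small constant from the previous lemma (in particular $c\le\tfrac18$ so that all the estimates in Lemma~\ref{lem:PropertiesOfMultiDimR}(iii) and the previous lemma apply to every shift $\bm\theta+\bm s$).

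The key step is the chain of (in)equalities
\begin{align*}
  \sum_{\bm s\in\Lambda\setminus\{\bm0\}} |\hat{\bm X}(\bm\theta+\bm s)|
  &= \sum_{\bm s\in\Lambda\setminus\{\bm0\}} |\hat{\bm D}(\bm\theta+\bm s)|\cdot|\hat{\bm R}(\bm\theta+\bm s)|
   = |\hat{\bm D}(\bm\theta)|\cdot\sum_{\bm s\in\Lambda\setminus\{\bm0\}} |\hat{\bm R}(\bm\theta+\bm s)| \\
  &< |\hat{\bm D}(\bm\theta)|\cdot\frac12\,|\hat{\bm R}(\bm\theta)|
   = \frac12\,|\hat{\bm X}(\bm\theta)|,
\end{align*}
where the second equality is the $\Lambda$-periodicity of $|\hat{\bm D}|$, pulling the common factor $|\hat{\bm D}(\bm\theta)|$ out of the sum, and the strict inequality is exactly the previous lemma applied to $\bm\theta$. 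Rearranging gives $|\hat{\bm X}(\bm\theta)| > 2\sum_{\bm s\in\Lambda\setminus\{\bm0\}}|\hat{\bm X}(\bm\theta+\bm s)|$, which is the claim.

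The only thing that really needs care is justifying the periodicity of $|\hat{\bm D}|$: from Lemma~\ref{lem:fourierformulaForD}, $\hat{\bm D}(\bm\theta+\bm s) = \prod_{j=1}^n \cos\big(2\pi\langle\bm A^j,\bm\theta\rangle + 2\pi\langle\bm A^j,\bm s\rangle\big)$, and since $\bm A^j\in\{0,1\}^m$ and $\bm s\in\{-\tfrac12,0,\tfrac12\}^m$, the inner product $\langle\bm A^j,\bm s\rangle$ is a half-integer, so $\cos(z + 2\pi\langle\bm A^j,\bm s\rangle) = \pm\cos(z)$; taking absolute values of the product kills all these signs. I expect this to be essentially the entire content of the proof, so there is no real obstacle here — the lemma is a clean corollary of the $\hat{\bm R}$-version once the factorization $\hat{\bm X}=\hat{\bm D}\cdot\hat{\bm R}$ and the periodicity are in hand. (One should double-check that $c$ is chosen so that $\|\bm\theta+\bm s\|_\infty\le\tfrac14$ fails in general — indeed it does, since $s_i$ can be $\tfrac12$ — but this is irrelevant because we never use the lower bound on $\hat{\bm R}$ at the shifted points; we only use the upper-bound ratio from Lemma~\ref{lem:PropertiesOfMultiDimR}(iii), which is already baked into the previous lemma.)
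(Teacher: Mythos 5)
Your proposal is correct and is essentially identical to the paper's (one-line) proof: factor $\hat{\bm{X}} = \hat{\bm{D}}\cdot\hat{\bm{R}}$, use the $\Lambda$-periodicity of $|\hat{\bm{D}}|$ to pull out the common factor, and apply the preceding lemma on $\hat{\bm{R}}$. Your extra justification of the periodicity via half-integer inner products is exactly the right (and standard) argument.
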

\begin{proof}
Use that $\hat{\bm{X}}(\bm{\theta}) = \hat{\bm{D}}(\bm{\theta}) \cdot \hat{\bm{R}}(\bm{\theta})$ and
$|\hat{\bm{D}}(\bm{\theta}+\bm{s})| = |\hat{\bm{D}}(\bm{\theta})|$ for all $\bm{s} \in \Lambda$.
\end{proof}


\section{Bounding the Fourier transform far from any half-integral point}

Finally, we will show that with high probability over the random choice of $\bm{A}$, 
the Fourier coefficients $|\hat{\bm{D}}(\bm{\theta})|$ decay very quickly as we move away from 
half integral points. 
We define the $\ell_p$ distance to $\Lambda$ as $d_p(\bm{\theta},\Lambda)=\min_{\bm{z}\in \Lambda}\|\bm{\theta}-\bm{z}\|_p$, where again 
 $\Lambda=\{-\frac{1}{2},0,\frac{1}{2}\}^m$. We will show the following:
\begin{lemma}\label{lem:intDhatupperbd}
With probability at least $1-\exp\left(-\Theta(n/m)\right)$ (over the random choice of $\bm{A}$) one has
\[\int_{\bm{\theta} \in [-\frac{1}{2},\frac{1}{2})^m: d_2(\bm{\theta},\Lambda)\ge \frac{1}{16\sqrt{t}}} |\hat{\bm{D}}(\bm{\theta})| d\bm{\theta} \leq \exp(-\Theta(n/m)).\]
\end{lemma}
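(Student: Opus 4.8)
The plan is to control the integral of $|\hat{\bm{D}}(\bm{\theta})|$ over the region far from $\Lambda$ by first bounding $\E[|\hat{\bm{D}}(\bm{\theta})|]$ pointwise, then integrating, and finally converting the resulting expectation bound into a high-probability bound via Markov's inequality. Recall from Lemma~\ref{lem:fourierformulaForD} that $|\hat{\bm{D}}(\bm{\theta})| = \prod_{j=1}^n |\cos(2\pi \langle \bm{A}^j,\bm{\theta}\rangle)|$, and that the columns $\bm{A}^j$ are i.i.d., so $\E_{\bm{A}}[|\hat{\bm{D}}(\bm{\theta})|] = \big(\E_{\bm{a}}[|\cos(2\pi\langle \bm{a},\bm{\theta}\rangle)|]\big)^n$ where $\bm{a}$ is a single random $\{0,1\}^m$-column with i.i.d.\ $\mathrm{Bernoulli}(p)$ entries. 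The first key step is to show that for every $\bm{\theta} \in [-\tfrac12,\tfrac12)^m$,
\[
  \E_{\bm{a}}\big[|\cos(2\pi\langle \bm{a},\bm{\theta}\rangle)|\big] \le \exp\big(-c\,\min\{p\cdot d_2(\bm{\theta},\Lambda)^2,\;1\}\big)
\]
for some constant $c>0$; raising this to the $n$-th power then gives the pointwise expectation bound $\E[|\hat{\bm{D}}(\bm{\theta})|] \le \exp(-cn\min\{p\,d_2(\bm{\theta},\Lambda)^2,1\})$ promised in item~(3) of the overview.

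For the single-column estimate I would argue as follows. Write $\bm{\theta} = \bm{s} + \bm{\eta}$ where $\bm{s}\in\Lambda$ is a nearest half-integral point, so $\|\bm{\eta}\|_2 = d_2(\bm{\theta},\Lambda)$ and each $|\eta_i| \le \tfrac14$ (the coordinates where $s_i = \pm\tfrac12$ have $|\eta_i|\le\tfrac14$; the others have $\theta_i = \eta_i$ with $|\eta_i|\le\tfrac14$ as well, since otherwise rounding to $\pm\tfrac12$ would be closer). Since $\cos$ has period $1$ in the argument $\langle\bm{a},\bm{\theta}\rangle$ up to sign and $\langle\bm{a},\bm{s}\rangle \in \tfrac12\setZ$, we get $|\cos(2\pi\langle\bm{a},\bm{\theta}\rangle)| = |\cos(2\pi(\langle\bm{a},\bm{\eta}\rangle + \tfrac12\langle\bm{a},\bm{s}\rangle\cdot 2))|$; more carefully, $|\cos(2\pi\langle\bm a,\bm\theta\rangle)|$ depends only on $\langle\bm a,\bm\eta\rangle$ modulo $\tfrac12$. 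The point is that $|\cos(2\pi u)| \le \exp(-c_0 \, \mathrm{dist}(u,\tfrac12\setZ)^2)$ for a constant $c_0>0$ and all $u\in\setR$. So it suffices to lower bound $\E_{\bm a}[\mathrm{dist}(\langle\bm a,\bm\eta\rangle,\tfrac12\setZ)^2]$ by $\Omega(\min\{p\|\bm\eta\|_2^2,1\})$. For this, a clean approach is to condition on a single coordinate: let $i^\ast = \arg\max_i |\eta_i|$, condition on all $a_i$ with $i\ne i^\ast$, and use that $a_{i^\ast} = 1$ with probability $p$; then with probability $p$ the value $\langle\bm a,\bm\eta\rangle$ shifts by $\eta_{i^\ast}$, and since $|\eta_{i^\ast}| \le \tfrac14$ the quantity $\mathrm{dist}(\cdot,\tfrac12\setZ)$ cannot be small for both values of $a_{i^\ast}$ simultaneously — the two candidate values differ by $|\eta_{i^\ast}| \in (0,\tfrac14]$, so at least one is at distance $\ge \tfrac14|\eta_{i^\ast}|$ from $\tfrac12\setZ$. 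This gives $\E_{\bm a}[\mathrm{dist}(\langle\bm a,\bm\eta\rangle,\tfrac12\setZ)^2] \ge \tfrac{p}{2}\cdot\tfrac{1}{16}\eta_{i^\ast}^2 \ge \tfrac{p}{32m}\|\bm\eta\|_2^2$, which is the right order once one notes that in the relevant range $d_2(\bm\theta,\Lambda)\le\tfrac{1}{16\sqrt t}$ we in fact only need the weaker ``$\min$ with $1$'' version, and that losing a factor of $m$ here is harmless because the exponent carries a factor $n \gg m^2$. (If the factor $m$ is wasteful, one can instead sum the single-coordinate contributions using independence and a second-moment/anti-concentration argument; but the crude bound already suffices.)

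With the pointwise bound in hand, the second step is to integrate. Split $[-\tfrac12,\tfrac12)^m$ into dyadic shells $\{2^k \cdot \tfrac{1}{16\sqrt t} \le d_2(\bm\theta,\Lambda) < 2^{k+1}\cdot\tfrac{1}{16\sqrt t}\}$ for $k\ge 0$, together with the ``far'' region where $\min\{p\,d_2(\bm\theta,\Lambda)^2,1\} = 1$. On each shell, $\E[|\hat{\bm D}(\bm\theta)|] \le \exp(-\Omega(n \cdot p \cdot 4^k/t)) = \exp(-\Omega(n\cdot 4^k/m))$ since $t = pm$; the volume of the shell is at most $3^m$ times the volume of an $\ell_2$-ball of radius $2^{k+1}\cdot\tfrac{1}{16\sqrt t}$ around each of the $3^m$ points of $\Lambda$, which is at most $3^m \cdot (O(2^k/\sqrt{tm}))^m = \exp(O(m\log(2^k/\sqrt{tm})))$ — in any case at most $\exp(O(m) + O(mk))$. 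Since $n/m \gg m \gg 1$, the term $\exp(-\Omega(n\cdot 4^k/m))$ dominates: the product over shell $k$ is at most $\exp(-\Omega(n\cdot 4^k/m) + O(mk) + O(m)) \le \exp(-\Omega(n/m))\cdot 2^{-k}$, say, and summing the geometric series in $k$ gives $\E\big[\int_{d_2(\bm\theta,\Lambda)\ge \frac{1}{16\sqrt t}}|\hat{\bm D}(\bm\theta)|\,d\bm\theta\big] \le \exp(-\Omega(n/m))$. Finally, apply Markov's inequality: with probability at least $1 - \exp(-\Omega(n/m))$ the integral is at most $\exp(-\Omega(n/m))$ (adjusting the constant in the exponent by a factor of two).

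\textbf{Main obstacle.} I expect the delicate point to be the single-column anti-concentration estimate $\E_{\bm a}[\mathrm{dist}(\langle\bm a,\bm\eta\rangle,\tfrac12\setZ)^2] = \Omega(\min\{p\|\bm\eta\|_2^2,1\})$ with a constant that is uniform over all $\bm\eta$ with $\|\bm\eta\|_\infty \le \tfrac14$ — in particular handling the case where many coordinates of $\bm\eta$ are comparably small (so no single coordinate dominates) and the Bernoulli randomness could conspire to keep $\langle\bm a,\bm\eta\rangle$ near $\tfrac12\setZ$. The single-coordinate conditioning argument sketched above circumvents this at the cost of a factor $m$, which the hypothesis $n \ge Cm^2\log m$ absorbs; getting the tight $\min\{p\,d_2^2,1\}$ without the extra $m$ (as stated in the overview) would require the more careful second-moment argument, but that sharper form is not actually needed for the final bound $\exp(-\Theta(n/m))$.
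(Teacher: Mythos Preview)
Your overall architecture (pointwise bound on $\E_{\bm A}[|\hat{\bm D}(\bm\theta)|]$, integrate, then Markov) is sound and would work --- the paper instead uses a martingale concentration on the column-by-column decrements $\Phi_k := \int_K \prod_{j\le k}|\cos(2\pi\langle\bm A^j,\bm\theta\rangle)|\,d\bm\theta$, but Markov on the expectation is an equally valid route once the per-column estimate is sharp enough.

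The genuine gap is in the per-column estimate. Your single-coordinate conditioning gives only
\[
\E_{\bm a}\big[|\cos(2\pi\langle\bm a,\bm\eta\rangle)|\big] \le 1 - \Omega\!\Big(\tfrac{p}{m}\|\bm\eta\|_2^2\Big),
\]
because $\|\bm\eta\|_\infty^2 \ge \|\bm\eta\|_2^2/m$ is the best you can say. Raising to the $n$-th power and plugging in $\|\bm\eta\|_2 \ge \tfrac{1}{16\sqrt t}$ yields $\exp(-\Theta(n/m^2))$, not $\exp(-\Theta(n/m))$. Your claim that ``losing a factor of $m$ here is harmless because the exponent carries a factor $n\gg m^2$'' is incorrect on both counts: (i) the lemma asks for $\exp(-\Theta(n/m))$, and (ii) even for the main theorem you need the integral to be $\ll n^{-\Theta(m)}=\exp(-\Theta(m\log m))$, whereas at the threshold $n=Cm^2\log m$ your bound is only $\exp(-\Theta(\log m))=m^{-\Theta(1)}$, which is far too large. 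Indeed, your own dyadic-shell computation silently switches to the sharp bound $\exp(-\Omega(n\cdot 4^k/m))$, which your single-coordinate argument does not deliver.

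The paper closes exactly this gap with a genuine second-moment/Taylor analysis: write $a_i = p + y_i$ with $\E[y_i]=0$, expand $\E[|\cos(s+2\pi\sum_i\theta_i y_i)|]$ to fourth order in $Z=\sum_i\theta_i y_i$, and use $\E[Z^2]=\Theta(p\|\bm\theta\|_2^2)$ together with a bound on $\E[Z^4]$ to obtain $\E[|\cos|]\le 1-\Omega(\min\{p\|\bm\theta\|_2^2,1\})$ with no factor of $m$ lost (handling separately the case where $\|\bm\theta\|_\infty$ is large). This is the ``more careful second-moment argument'' you allude to; it is not optional here, it is the crux.
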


Recall that 
\[
  \hat{\bm{D}}(\bm{\theta}) = \prod_{j=1}^n \cos\Big(2\pi \sum_{i=1}^m A_{ij}\theta_i\Big)
\]
for all $\bm{\theta} \in \setR^m$ and $\bm{A} \in \{ 0,1\}^{m \times n}$ is a random matrix with $\Pr[A_{ij}=1]=p$
for each entry independently. We want to show that $\hat{\bm{D}}(\bm{\theta})$ is decaying for all points that are far from 
half-integral points. But by periodicity of the cosine, $|\hat{\bm{D}}(\bm{\theta})| = |\hat{\bm{D}}(\bm{\theta} + \bm{s})|$ for any $\bm{s}\in \{-\frac{1}{2},0,\frac{1}{2}\}^m$, and so it suffices to consider the 
points with $\|\bm{\theta}\|_{\infty} \leq \frac{1}{4}$. 
We begin with a bound that is useful if $\bm{\theta}$ has a large entry and analyze the contribution of each of the $n$
factors separately:
\begin{lemma} \label{lem:BoundingOneFourierFactorForLargeThetaEntries}
Let $\bm{\theta} \in \setR^m$ with $\|\bm{\theta}\|_{\infty} \leq \frac{1}{4}$.
Draw $\bm{a} \in \{ 0,1\}^m$ with $\Pr[a_i=1]=p$ independently. Then $\E[|\cos(2\pi \sum_{i=1}^m a_i\theta_i)|] \leq 1 - \frac{\pi^2}{4} \cdot p \cdot \|\bm{\theta}\|_{\infty}^2$.
\end{lemma}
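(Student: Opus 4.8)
The plan is to isolate the single coordinate $k \in [m]$ achieving $|\theta_k| = \|\bm{\theta}\|_{\infty} =: \phi$, and to show that the randomness of $a_k$ alone already forces the claimed multiplicative gap, uniformly over the outcomes of the remaining coordinates. Write $\sum_{i=1}^m a_i\theta_i = Z + a_k\phi$ where $Z := \sum_{i \neq k} a_i\theta_i$ is independent of $a_k$. Conditioning on $Z$ and averaging over $a_k \in \{0,1\}$ gives
\[
 \E\big[\,|\cos(2\pi(Z+a_k\phi))|\,\big|\,Z\,\big] = (1-p)\,|\cos(2\pi Z)| + p\,|\cos(2\pi(Z+\phi))|,
\]
so it suffices to bound the right-hand side by $1 - \frac{\pi^2}{4}p\phi^2$ for \emph{every} real $Z$; the lemma then follows by taking expectation over $Z$ and recalling $\phi = \|\bm{\theta}\|_{\infty}$.

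For the pointwise bound I would use the elementary estimate $|\cos u| = \sqrt{1-\sin^2 u} \le 1 - \tfrac12\sin^2 u$, which turns the right-hand side into $1 - \tfrac{1-p}{2}\sin^2(2\pi Z) - \tfrac{p}{2}\sin^2(2\pi(Z+\phi))$. Since $p \le \tfrac12$ we have $1-p \ge p$, so this is at most $1 - \tfrac{p}{2}\big(\sin^2(2\pi Z) + \sin^2(2\pi(Z+\phi))\big)$. The crucial trigonometric fact is that this sum of two squared sines cannot be small: using $\sin^2 x = \tfrac{1-\cos 2x}{2}$ together with the sum-to-product identity $\cos A + \cos B = 2\cos\tfrac{A+B}{2}\cos\tfrac{A-B}{2}$,
\[
 \sin^2(2\pi Z) + \sin^2(2\pi(Z+\phi)) = 1 - \cos(2\pi\phi)\cos(4\pi Z + 2\pi\phi) \ge 1 - \cos(2\pi\phi),
\]
where the last step uses $\cos(2\pi\phi) \ge 0$, which holds because $|\phi| \le \tfrac14$ implies $|2\pi\phi| \le \tfrac{\pi}{2}$.

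It remains to lower bound $1 - \cos(2\pi\phi) = 2\sin^2(\pi\phi)$ by a multiple of $\phi^2$: on $[0,\tfrac{\pi}{2}]$ Jordan's inequality gives $\sin x \ge \tfrac{2}{\pi}x$, hence $\sin(\pi|\phi|) \ge 2|\phi|$ and $2\sin^2(\pi\phi) \ge 8\phi^2 \ge \tfrac{\pi^2}{2}\phi^2$. Combining the three displays, the conditional expectation is at most $1 - \tfrac{p}{2}\cdot\tfrac{\pi^2}{2}\phi^2 = 1 - \tfrac{\pi^2}{4}p\,\|\bm{\theta}\|_{\infty}^2$, and averaging over $Z$ completes the proof. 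The only mildly delicate point is the realization that the estimate can be made pointwise in $Z$ — that the ``loss'' contributed by coordinate $k$ is uniform over all outcomes of the other coordinates — after which everything reduces to the one-variable trig inequalities above; the constant $\tfrac{\pi^2}{4}$ is deliberately wasteful (one could extract roughly $2\pi^2$ near $Z = 0$) but this is harmless for the application in Lemma~\ref{lem:intDhatupperbd}.
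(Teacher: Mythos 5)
Your proof is correct, and it shares the paper's basic decomposition: isolate the coordinate achieving $\|\bm{\theta}\|_\infty$, condition on the rest, and reduce to a one-variable estimate for $(1-p)|\cos(2\pi Z)|+p|\cos(2\pi(Z+\phi))|$ using $p\le 1-p$. Where you diverge is in how that one-variable bound is obtained. The paper argues combinatorially: since the two points $2\pi Z$ and $2\pi(Z+\phi)$ differ by at most $\pi/2$, at least one of them lies at distance $\ge \pi\phi$ from the nearest multiple of $\pi$; pessimistically that outcome occurs with probability only $p$, and there $|\cos|\le 1-\tfrac14(\pi\phi)^2$. You instead use the uniform bound $|\cos u|\le 1-\tfrac12\sin^2 u$ and the exact identity $\sin^2(2\pi Z)+\sin^2(2\pi(Z+\phi))=1-\cos(2\pi\phi)\cos(4\pi Z+2\pi\phi)\ge 1-\cos(2\pi\phi)$, finishing with Jordan's inequality. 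Your route avoids the (mildly delicate, though correct) ``at least one of the two outcomes is far from $\pi\setZ$'' claim and replaces the case analysis with an identity that is verified by pure computation; the paper's route is shorter to state and makes the probabilistic intuition (one of the two coin outcomes must hurt) more transparent. Both yield the same constant $\tfrac{\pi^2}{4}$, and both implicitly use $p\le\tfrac12$, which is part of the paper's standing assumptions. One cosmetic slip: the lemma is applied via Lemma~\ref{lem:ExpectedDiscFourierDecayForOneFactor}, not directly in Lemma~\ref{lem:intDhatupperbd}, but this does not affect the argument.
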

\begin{proof}
W.l.o.g. $\theta_1 = \|\bm{\theta}\|_{\infty} \geq 0$. Fix any outcome for 
$a_2,\ldots,a_m$ and let $s := 2\pi\sum_{i=2}^m a_i\theta_i$. Note that $|2\pi \theta_1| \leq \frac{\pi}{2}$. Then
at least one of the outcomes $\{s,s+2\pi \theta_1\}$ has a distance 
of at least $\pi \theta_1$ to the nearest multiple of $\pi$. 
Pessimistically that outcome is attained with probability $p$. Hence  
\[
  \E_{a_1}[|\cos(s + 2\pi a_1\theta_1)| ] \leq (1-p) \cdot \cos(0) + p \cdot |\cos(\underbrace{\pi \theta_1}_{\leq \pi/2})|
\leq (1-p) + p \cdot \Big(1 - \frac{1}{4} \cdot (\pi \theta_1)^2\Big) = 1-\frac{\pi^2}{4}p\theta_1^2
\]
using that $|\cos(x)| \leq 1-\frac{1}{4}x^2$ for $|x| \leq \frac{\pi}{2}$.
\end{proof}

The orthogonal case that we need to analyze is the following: 
\begin{lemma} \label{lem:ExpectedCosineForSmallL2NormCoefficients}
For a small enough constant $b>0$ the following holds: 
Suppose that $y_1,\ldots,y_m \in \setR$ are independent random variables with $|y_i| \leq 1, \E[y_i]=0$
and $\frac{p}{2} \leq \E[y_i^2] \leq p$ with $0 \leq p \leq \frac{1}{2}$. 
Suppose that $\bm{\theta} \in \setR^m$ is a vector with $\|\bm{\theta}\|_{\infty} \leq \frac{1}{4}$, $p\|\bm{\theta}\|_2^2 \leq b$.
Then for all $s \in \setR$ one has 
\[
  \E\Big[\Big|\cos\Big(s+2\pi\sum_{i=1}^m \theta_iy_i\Big)\Big|\Big] \leq 1-\frac{1}{2} \cdot p\cdot \|\bm{\theta}\|_2^2
\] 
\end{lemma}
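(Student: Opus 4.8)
The plan is to reduce the statement to an anti-concentration estimate for a characteristic function. Write $Z := \sum_{i=1}^m \theta_i y_i$ and start from the elementary pointwise bound $|\cos(u)| = \sqrt{1-\sin^2(u)} \le 1 - \tfrac12\sin^2(u)$, which is affine in $\sin^2(u)$ and therefore---unlike $|\cos(u)| \le e^{-\sin^2(u)/2}$---survives taking expectations. Using $\sin^2(u) = \tfrac12(1-\cos(2u))$, it suffices to establish
\[
 \E\big[\cos(2s + 4\pi Z)\big] \;\le\; 1 - 2p\|\bm\theta\|_2^2 ,
\]
since then $\E[\,|\cos(s+2\pi Z)|\,] \le 1 - \tfrac12\E[\sin^2(s+2\pi Z)] = 1 - \tfrac14\big(1 - \E[\cos(2s+4\pi Z)]\big) \le 1 - \tfrac12 p\|\bm\theta\|_2^2$, which is the claim.

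To bound the displayed expectation I would introduce $W_i := 4\pi\theta_i y_i$, so that $2s+4\pi Z = 2s + \sum_i W_i$ with the $W_i$ independent, and use
\[
 \E[\cos(2s+4\pi Z)] = \mathrm{Re}\Big(e^{2is}\prod_{i=1}^m \E[e^{iW_i}]\Big) \le \prod_{i=1}^m \big|\E[e^{iW_i}]\big| .
\]
The core is the per-coordinate estimate $\big|\E[e^{iW_i}]\big| \le \exp\!\big(-c\,\theta_i^2\,\E[y_i^2]\big)$ for an absolute constant $c>0$. I would prove it from $\E[W_i]=0$, $|W_i|\le 4\pi\|\bm\theta\|_\infty \le \pi$, and $\E[W_i^2] = 16\pi^2\theta_i^2\E[y_i^2] \le 16\pi^2 b$ (the last because $\theta_i^2 p \le \|\bm\theta\|_2^2 p \le b$). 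Writing $|\E[e^{iW_i}]|^2 = (\E[\cos W_i])^2 + (\E[\sin W_i])^2$: for the real part, the fourth-order Taylor bound $\cos(x) \le 1 - \tfrac{x^2}{2} + \tfrac{x^4}{24}$ (valid for $|x|\le\pi$) together with $W_i^4 \le \pi^2 W_i^2$ gives $0 < \E[\cos W_i] \le 1 - \big(\tfrac12-\tfrac{\pi^2}{24}\big)\E[W_i^2]$, where crucially $\tfrac12-\tfrac{\pi^2}{24}>0$; for the imaginary part, $|\E[\sin W_i]| = |\E[\sin W_i - W_i]| \le \tfrac16\E[|W_i|^3] \le \tfrac{\pi}{6}\E[W_i^2]$, so $(\E[\sin W_i])^2 \le \tfrac{\pi^2}{36}(\E[W_i^2])^2$ is negligible against $\E[W_i^2]$ once $b$ is small (as $\E[W_i^2] \le 16\pi^2 b$). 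Combining and taking square roots (using $1-x\le e^{-x}$) yields the per-coordinate estimate.

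Multiplying over $i$ and using $\E[y_i^2] \ge p/2$ gives $\prod_i \big|\E[e^{iW_i}]\big| \le \exp\big(-\kappa\, p\,\|\bm\theta\|_2^2\big)$ for an absolute constant $\kappa>0$; tracking the numerical constants one checks $\kappa>2$, so, since $p\|\bm\theta\|_2^2 \le b$, shrinking $b$ makes $\exp(-\kappa p\|\bm\theta\|_2^2) \le 1 - 2p\|\bm\theta\|_2^2$, which is the displayed estimate. The step I expect to be the main obstacle is the per-coordinate bound: because $\|\bm\theta\|_\infty$ is controlled only by $\tfrac14$, the phase $W_i$ need not be small, so the cheap cubic Taylor remainder does not beat the quadratic term---one genuinely needs the quartic expansion, where the numerical constant $\tfrac12-\tfrac{\pi^2}{24}$ happens to be positive, together with the separate observation that the imaginary part $\E[\sin W_i]$ enters only at second order in $\E[W_i^2]$ and so can be absorbed for $b$ small. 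Everything else is routine bookkeeping.
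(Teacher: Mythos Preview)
Your argument is correct and takes a genuinely different route from the paper's.

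The paper proceeds by first reducing to $0\le s\le \pi/2$, then splitting into two cases according to the size of $s$. In each case it writes down an explicit pointwise polynomial upper bound for $|\cos(s+x)|$ (of the form $1-\alpha x^2-\sin(s)x+\beta x^4$ near $s=0$, and $0.99-\sin(s)x+100x^2$ for $s$ bounded away from~$0$), takes expectations in $x=2\pi Z$, and controls $\E[Z^2]$ and $\E[Z^4]$ directly; the fourth-moment computation involves the usual bookkeeping over index quadruples $(i_1,i_2,i_3,i_4)$.

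Your approach sidesteps all of this. The identity $|\cos u|\le \tfrac34+\tfrac14\cos(2u)$ removes the absolute value at the cost of doubling the frequency, and then $\E[\cos(2s+4\pi Z)]\le\prod_i|\E[e^{iW_i}]|$ both eliminates the dependence on $s$ and factorizes the problem coordinate-wise. The per-coordinate estimate needs only second moments of $W_i$ (the fourth power enters as $W_i^4\le\pi^2W_i^2$, so no cross-terms appear), and the shift $s$ never has to be tracked. The one delicate point you correctly flag is that $|W_i|$ is only bounded by $\pi$, so the usable quadratic coefficient is the small but positive number $\tfrac12-\tfrac{\pi^2}{24}\approx0.089$; this is what makes the final $\kappa$ exceed $2$ (in fact $\kappa\to 8\pi^2(\tfrac12-\tfrac{\pi^2}{24})\approx7$ as $b\to0$), with enough margin to close the argument. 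One small point you leave implicit: the bound $(\E[\cos W_i])^2\le(1-c_0\E[W_i^2])^2$ uses $\E[\cos W_i]\ge0$, which follows from $\cos x\ge1-x^2/2$ and $\E[W_i^2]\le16\pi^2b$ once $b<1/(8\pi^2)$.

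In short, the paper's proof is more hands-on and robust in its constants (the case split gives plenty of slack), while yours is structurally cleaner---no case analysis on $s$, no fourth-moment cross-terms---at the price of hinging on the numerical fact that $\tfrac12-\tfrac{\pi^2}{24}>0$.
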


\begin{proof}
Since $|\cos(x)|$ is $\pi$-periodic, we may assume that 
$-\frac{\pi}{2} \leq s \leq \frac{\pi}{2}$ and hence $0 \leq \cos(s) \leq 1$.
If $-\frac{\pi}{2} \leq s < 0$, we can also replace $y_i$ with $-y_i$ and $s$ with $-s$ without affecting the claim and hence assume
that $0 \leq s \leq \frac{\pi}{2}$. 

Consider the random variable $Z := \sum_{i=1}^m \theta_iy_i$. First of all, we have 
$\E[Z^2] = \textrm{Var}[Z] = \sum_{i=1}^m \theta_i^2 \E[y_i^2]$. The 4th moment of $Z$ can be written as: 
\begin{eqnarray*}
\E[Z^4] &=& \E\Big[\Big(\sum_{i=1}^m \theta_iy_i\Big)^4\Big] 
= \sum_{i_1=1}^m \sum_{i_2=1}^m \sum_{i_3=1}^m \sum_{i_4=1}^m \theta_{i_1}\theta_{i_2}\theta_{i_3}\theta_{i_4}\E[y_{i_1}y_{i_2}y_{i_3}y_{i_4}] \quad \quad (*) \\
&=& \sum_{i=1}^m \theta_i^4 \E[y_i^4] + \frac{1}{2} {4 \choose 2} \sum_{i \neq j} \theta_i^2 \theta_j^2 \E[y_i^2]\cdot \E[y_j^2] \\
&\stackrel{|y_i| \leq 1}{\leq}& \|\bm{\theta}\|_{\infty}^2 \underbrace{\sum_{i=1}^m \theta_i^2\E[y_i^2]}_{=\E[Z^2]} + 3 \cdot \Big(\underbrace{\sum_{i=1}^m \theta_i^2 \E[y_i^2]}_{=\E[Z^2]}\Big)^2 
\leq \|\bm{\theta}\|_{\infty}^2 \cdot \E[Z^2] + 3 \cdot \E[Z^2]^2.
\end{eqnarray*}
Note that the expectations $\E[y_{i_1}y_{i_2}y_{i_3}y_{i_4}]$ are $0$ unless $i_1=i_2=i_3=i_4$ or there are two identical index pairs, 
for example with $i_1=i_2$ and $i_{3}=i_4$. We distinguish two further cases for the regime of $s$: 
\begin{itemize}
\item \emph{Case $0 \leq s \leq \frac{\pi}{12}$}: One can check that
\[
  |\cos(s + x)| \leq 1 - \frac{1}{3} x^2 - \sin(s) \cdot x + \frac{1}{8} x^4 \quad \forall x \in \setR
\]
Then 
\begin{eqnarray*}
  \E[|\cos(s+2\pi \cdot Z)|] &\leq& 1 - \frac{(2\pi)^2}{3} \E[Z^2] - 2\pi \sin(s) \underbrace{\E[Z]}_{=0} + \frac{(2\pi)^4}{8} \E[Z^4] \\
&\leq& 1 - \underbrace{(2\pi)^2 \Big(\frac{1}{3}-\frac{(2\pi)^2}{8}\|\bm{\theta}\|_{\infty}^2-3 \cdot \frac{(2\pi)^2}{8} \underbrace{\E[Z^2]}_{\leq b}\Big)}_{\geq 1/2} \cdot \E[Z^2] \leq 1 - \frac{1}{2} \E[Z^2]
\end{eqnarray*}
using that $\|\bm{\theta}\|_{\infty} \leq \frac{1}{4}$ and $b$ is small enough.
\item \emph{Case $\frac{\pi}{20} \leq s \leq \frac{\pi}{2}$.} In this case one can verify that 
$|\cos(s+x)| \leq 0.99-\sin(s)x+100x^2$ for all $x \in \setR$. Then 
\[
  \E[|\cos(s + 2\pi Z)|] \leq 0.99 - 2\pi \sin(s) \E[Z] + 100 \cdot (2\pi)^2\underbrace{\E[Z^2]}_{\leq b} \leq 0.999
\]
for small enough $b$.
\end{itemize}
\end{proof}

The next lemma will summarize the two cases that we have distinguished so far
and show that for $\|\bm{\theta}\|_{\infty} \leq \frac{1}{4}$, every column of $\bm{A}$ reduces the value $|\hat{\bm{D}}(\bm{\theta})|$
by a factor of $1 - \Theta(\min\{ p\|\bm{\theta}\|_2^2,1\})$ in expectation. 
\begin{lemma}\label{lem:ExpectedDiscFourierDecayForOneFactor}
There is a constant $c>0$ so that the following is true:
Let $\bm{\theta} \in \setR^m$ with $\|\bm{\theta}\|_{\infty}\leq \frac{1}{4}$. 
Draw $\bm{a} \in \{ 0,1\}^m$ by letting $\Pr[a_i=1]=p$ independently for $0 \leq p \leq \frac{1}{2}$.
Then 
\[
  \E\Big[\Big|\cos\Big(2\pi \sum_{i=1}^m a_i\theta_i\Big)\Big|\Big] \leq 1 - \min\Big\{ \frac{1}{4} p\|\bm{\theta}\|_2^2,c\Big\}
\]
\end{lemma}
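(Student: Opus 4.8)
The plan is to split on the size of $p\|\bm{\theta}\|_2^2$ relative to the threshold $b$ furnished by Lemma~\ref{lem:ExpectedCosineForSmallL2NormCoefficients}, and in each regime to reduce to one of the two preceding lemmas. The reduction used throughout is to write $a_i = y_i + p$ with $y_i := a_i - p$: then $\E[y_i]=0$, $|y_i|\le 1$, and $\E[y_i^2]=p(1-p)\in[\tfrac p2,p]$ since $p\le\tfrac12$, while $2\pi\sum_i a_i\theta_i = s + 2\pi\sum_i y_i\theta_i$ with the deterministic shift $s:=2\pi p\sum_i\theta_i$. This is exactly the form of the random variable appearing in Lemma~\ref{lem:ExpectedCosineForSmallL2NormCoefficients}.

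First, if $p\|\bm{\theta}\|_2^2\le b$, then Lemma~\ref{lem:ExpectedCosineForSmallL2NormCoefficients} applies directly with this $s$ and these $y_i$ and gives $\E[|\cos(2\pi\sum_i a_i\theta_i)|]\le 1-\tfrac12 p\|\bm{\theta}\|_2^2\le 1-\tfrac14 p\|\bm{\theta}\|_2^2$; as $\tfrac14 p\|\bm{\theta}\|_2^2\le\tfrac b4$, this is already the claimed bound once $c\le\tfrac b4$. Second, if $p\|\bm{\theta}\|_2^2> b$ I need instead the flat estimate $\E[|\cos(\cdots)|]\le 1-c$, and I distinguish two sub-cases. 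If some coordinate satisfies $p\theta_i^2\ge\tfrac b2$, then $\|\bm{\theta}\|_\infty^2\ge\theta_i^2$ and Lemma~\ref{lem:BoundingOneFourierFactorForLargeThetaEntries} already yields $\E[|\cos(2\pi\sum_i a_i\theta_i)|]\le 1-\tfrac{\pi^2}4 p\|\bm{\theta}\|_\infty^2\le 1-\tfrac{\pi^2 b}{8}$. Otherwise $p\theta_i^2<\tfrac b2$ for every $i$, and since $p\|\bm{\theta}\|_2^2>b$ I may add coordinates greedily to a set $T$ until $p\sum_{i\in T}\theta_i^2\ge\tfrac b2$; because the last coordinate added raises the running sum by less than $\tfrac b2$, the resulting set obeys $\tfrac b2\le p\sum_{i\in T}\theta_i^2\le b$. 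Conditioning on $(a_i)_{i\notin T}$ folds $2\pi\sum_{i\notin T}a_i\theta_i$ into the deterministic shift, so conditionally $2\pi\sum_i a_i\theta_i = s'+2\pi\sum_{i\in T}y_i\theta_i$ for some $s'\in\setR$; the subvector $(\theta_i)_{i\in T}$ has sup-norm at most $\tfrac14$ and $p\sum_{i\in T}\theta_i^2\le b$, so Lemma~\ref{lem:ExpectedCosineForSmallL2NormCoefficients} (applied to the independent $y_i$, $i\in T$) bounds the conditional expectation of $|\cos(s'+2\pi\sum_{i\in T}y_i\theta_i)|$ by $1-\tfrac12 p\sum_{i\in T}\theta_i^2\le 1-\tfrac b4$, uniformly over the conditioning. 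Averaging over $(a_i)_{i\notin T}$ gives $\E[|\cos(2\pi\sum_i a_i\theta_i)|]\le 1-\tfrac b4$. Taking $c:=\tfrac b4$ (which satisfies $c\le\tfrac{\pi^2 b}{8}$) then makes $\E[|\cos(\cdots)|]\le 1-\min\{\tfrac14 p\|\bm{\theta}\|_2^2,c\}$ hold in all cases: when $p\|\bm{\theta}\|_2^2\le b$ the min equals $\tfrac14 p\|\bm{\theta}\|_2^2$, and otherwise it equals $c$.

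The only step with content beyond routine bookkeeping is the truncation in the regime $p\|\bm{\theta}\|_2^2>b$: Lemma~\ref{lem:ExpectedCosineForSmallL2NormCoefficients} cannot be used on $\bm{\theta}$ itself there, so one has to carve out a sub-vector whose weighted squared $\ell_2$-mass lands in the window $[\tfrac b2,b]$ and then condition away the remaining coordinates. The point to get right is that the greedy selection never overshoots the upper end of this window, which is precisely why the single-dominant-coordinate case ($p\theta_i^2\ge\tfrac b2$) must be peeled off first with Lemma~\ref{lem:BoundingOneFourierFactorForLargeThetaEntries}; once no single coordinate contributes more than $\tfrac b2$, greedy addition lands safely inside $[\tfrac b2,b]$, and the rest is arithmetic.
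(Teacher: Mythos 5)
Your proof is correct and follows essentially the same route as the paper's: peel off the case where a single coordinate carries weight at least $b/2$ via Lemma~\ref{lem:BoundingOneFourierFactorForLargeThetaEntries}, and otherwise select a sub-collection of coordinates whose weighted mass lands in $[\tfrac b2,b]$, condition on the rest, and invoke Lemma~\ref{lem:ExpectedCosineForSmallL2NormCoefficients}. The only cosmetic difference is that the paper takes a prefix $\{1,\dots,k\}$ where you take a greedy set $T$; the overshoot argument is identical.
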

\begin{proof}
If $p\|\bm{\theta}\|_{\infty}^2 \geq c$, then Lemma~\ref{lem:BoundingOneFourierFactorForLargeThetaEntries}
bounds the left hand side of the claim even by $1-\frac{\pi^2}{4}c$ and we are done. Then we can assume from now on that 
$p|\theta_i|^2 \leq c$ for all $i=1,\ldots,m$ for an arbitrarily small constant $c$. 

We set $y_i := a_i-p$, so that the $y_i$'s are ``centered'' random variables in the sense that $\E[y_i]=0$. 
Note that $|y_i| \leq 1$ and $\E[y_i^2]=p\cdot (1-p)^2 + (1-p)\cdot (-p)^2 = p(1-p) \in [\frac{p}{2},p]$
as $0 \leq p \leq \frac{1}{2}$.
 
If $p\|\bm{\theta}\|_2^2 \leq b$ then we can apply Lemma~\ref{lem:ExpectedCosineForSmallL2NormCoefficients} to obtain 
\[
 \E\Big[\Big|\cos\Big(2\pi \sum_{i=1}^m a_i\theta_i\Big)\Big|\Big] = \E\Big[\Big|\cos\Big(\underbrace{2\pi \cdot p \cdot \sum_{i=1}^m \theta_i}_{=: s} + 2\pi\sum_{i=1}^m \theta_iy_i\Big)\Big|\Big] \leq 1-\frac{1}{4} p \|\bm{\theta}\|_2^2.
\]
Otherwise suppose that $p\|\bm{\theta}\|_2^2 > b$. Assuming that $c \leq b/2$, there is an index $k$
so that $\frac{b}{2} \leq p\sum_{i=1}^k |\theta_i|^2 \leq b$. Then fixing any outcome for $a_{k+1},\ldots,a_m$ we get
\begin{eqnarray*}
  \E_{a_1,\ldots,a_k}\Big[\Big|\cos\Big(2\pi \sum_{i=1}^m a_i \theta_i\Big)\Big|\Big] &=& \E_{y_{1},\ldots,y_k}\Big[\Big|\cos\Big(\underbrace{2\pi \cdot p \sum_{i=1}^m \theta_i + 2\pi\sum_{i=k+1}^m \theta_iy_i}_{=:s} + 2\pi\sum_{i=1}^k \theta_iy_i\Big)\Big|\Big] \\
&\leq& 1 - \frac{p}{2} \sum_{i=1}^k \theta_i^2 \leq 1 - \frac{1}{4} b
\end{eqnarray*}
again applying Lemma~\ref{lem:ExpectedCosineForSmallL2NormCoefficients}.
\end{proof}


The next step is to show that with high probability even the integral over the 
Fourier coefficients that are far from $\Lambda$ is tiny:
\begin{lemma} \label{lem:GeneralBoundOnSumOfFarFourierCoeff}
For parameters $ p \in [0, \frac{1}{2}]$ and $\delta>0$ so that $\frac{p\delta^2}{6} \leq 1$ and $p\delta^2\le c$, 
define 
\[
K := \left\{ \bm{\theta} \in \left[-\tfrac{1}{2},\tfrac{1}{2}\right)^m \mid d_2(\bm{\theta},\Lambda)\ge \delta \right\}.
\] 
Then with probability at least $1-\exp\left(-\frac{1}{96}p\delta^2n\right)$ one has
$\int_{\bm{\theta} \in K} |\hat{\bm{D}}(\bm{\theta})| d\bm{\theta} \leq \exp(-\frac{\delta^2}{24} pn)$.
\end{lemma}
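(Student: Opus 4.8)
The plan is to bound the expectation $\E_{\bm{A}}\big[\int_{K}|\hat{\bm{D}}(\bm{\theta})|\,d\bm{\theta}\big]$ and then apply Markov's inequality. By Lemma~\ref{lem:fourierformulaForD} we have $|\hat{\bm{D}}(\bm{\theta})|=\prod_{j=1}^{n}|\cos(2\pi\langle\bm{A}^j,\bm{\theta}\rangle)|$, which for a fixed $\bm{\theta}$ is a product of $n$ independent factors lying in $[0,1]$; hence $\E[|\hat{\bm{D}}(\bm{\theta})|]$ factorizes over the columns and it suffices to bound the single-column quantity $\E_{\bm{a}}[\,|\cos(2\pi\langle\bm{a},\bm{\theta}\rangle)|\,]$ uniformly over $\bm{\theta}\in K$.

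First I would reduce to the regime $\|\bm{\theta}\|_\infty\le\tfrac14$ covered by Lemma~\ref{lem:ExpectedDiscFourierDecayForOneFactor}. Given $\bm{\theta}\in K$, let $\bm{s}$ be the point of $\Lambda$ closest to $\bm{\theta}$, computed coordinatewise, and put $\bm{\phi}:=\bm{\theta}-\bm{s}$. Since $\Lambda=\{-\tfrac12,0,\tfrac12\}^m$ is a product set, a direct check shows $\|\bm{\phi}\|_\infty\le\tfrac14$ and $\|\bm{\phi}\|_2=d_2(\bm{\theta},\Lambda)\ge\delta$. Because $\bm{a}\in\{0,1\}^m$ and $\bm{s}\in\{-\tfrac12,0,\tfrac12\}^m$, the inner product $\langle\bm{a},\bm{s}\rangle$ is a half-integer, so $2\pi\langle\bm{a},\bm{s}\rangle\in\pi\setZ$ and therefore $|\cos(2\pi\langle\bm{a},\bm{\theta}\rangle)|=|\cos(2\pi\langle\bm{a},\bm{\phi}\rangle)|$. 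Applying Lemma~\ref{lem:ExpectedDiscFourierDecayForOneFactor} to $\bm{\phi}$ and using $\|\bm{\phi}\|_2\ge\delta$ together with $p\delta^2\le c$ (which ensures $\min\{\tfrac14 p\|\bm{\phi}\|_2^2,c\}\ge\tfrac14 p\delta^2$), I get $\E_{\bm{a}}[\,|\cos(2\pi\langle\bm{a},\bm{\theta}\rangle)|\,]\le 1-\tfrac14 p\delta^2$. Multiplying over the $n$ independent columns and using $1-x\le e^{-x}$ gives $\E[|\hat{\bm{D}}(\bm{\theta})|]\le\exp(-\tfrac14 p\delta^2 n)$ for every $\bm{\theta}\in K$.

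Then I would integrate and apply Markov. By Tonelli, $\E\big[\int_K|\hat{\bm{D}}(\bm{\theta})|\,d\bm{\theta}\big]=\int_K\E[|\hat{\bm{D}}(\bm{\theta})|]\,d\bm{\theta}\le\vol(K)\cdot\exp(-\tfrac14 p\delta^2 n)\le\exp(-\tfrac14 p\delta^2 n)$, since $K\subseteq[-\tfrac12,\tfrac12)^m$ has volume at most $1$. Markov's inequality at threshold $\exp(-\tfrac{\delta^2}{24}pn)$ then gives a failure probability at most $\exp\big(-(\tfrac14-\tfrac{1}{24})p\delta^2 n\big)=\exp(-\tfrac{5}{24}p\delta^2 n)\le\exp(-\tfrac{1}{96}p\delta^2 n)$, which is (comfortably more than) the claimed bound.

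The argument is short, and the one step that needs genuine care is the periodicity reduction in the second paragraph: verifying that folding $\bm{\theta}$ onto $[-\tfrac14,\tfrac14]^m$ by subtracting its nearest $\Lambda$-point leaves each factor $|\cos(2\pi\langle\bm{a},\cdot\rangle)|$ unchanged, and that this fold preserves the $\ell_2$-length. This is precisely what makes the distance to $\Lambda=\{-\tfrac12,0,\tfrac12\}^m$ --- rather than to $\bm{0}$ or to the integer lattice --- the correct quantity controlling the decay. Everything else (independence across columns, Tonelli, Markov, and the bound $\vol(K)\le 1$) is routine, and the gap between the $\tfrac14$ in the per-column estimate and the exponents $\tfrac{1}{24}$, $\tfrac{1}{96}$ in the statement is comfortably covered by the hypothesis $p\delta^2\le c$ (with $\tfrac{p\delta^2}{6}\le1$ left as slack).
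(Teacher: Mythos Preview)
Your proof is correct. It takes a genuinely different route from the paper's argument, and in fact a cleaner one.

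The paper proceeds sequentially: it defines $\Phi_k := \int_K \prod_{j\le k}|\cos(2\pi\langle\bm{A}^j,\bm{\theta}\rangle)|\,d\bm{\theta}$, shows $\E[\Phi_k\mid \bm{A}^1,\ldots,\bm{A}^{k-1}]\le(1-\tfrac{p\delta^2}{12})\Phi_{k-1}$, writes $\Phi_k=(1-\alpha_k)\Phi_{k-1}$, and then applies a martingale Chernoff bound to $\sum_k\alpha_k$. Your approach instead bounds $\E[|\hat{\bm{D}}(\bm{\theta})|]$ pointwise (using independence of the columns), integrates via Tonelli, and finishes with a plain Markov inequality. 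Both arguments rest on the same per-column estimate from Lemma~\ref{lem:ExpectedDiscFourierDecayForOneFactor} together with the $\tfrac12\setZ^m$-periodicity reduction, which you spell out more explicitly than the paper does. Your first-moment method is more elementary and yields slightly better constants (failure probability $\exp(-\tfrac{5}{24}p\delta^2 n)$ versus the claimed $\exp(-\tfrac{1}{96}p\delta^2 n)$); the paper's martingale route would in principle give sharper tail behavior, but that extra strength is not used anywhere downstream.
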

\begin{proof}
Let us abbreviate $\phi_k(\bm{\theta}) := \prod_{j=1}^k |\cos(2\pi \sum_{i=1}^m \theta_iA_{ij})|$
and $\Phi_k := \int_{\bm{\theta} \in K} \phi_k(\bm{\theta}) d\bm{\theta}$. Intuitively, $\phi_k(\bm{\theta})$ gives the 
contribution of the first $k$ columns of the matrix $\bm{A}$ to the Fourier coefficient $|\hat{\bm{D}}(\bm{\theta})|$. In particular 
$\phi_0(\bm{\theta})=1$ and $\phi_n(\bm{\theta}) = |\hat{\bm{D}}(\bm{\theta})|$.
We have proven in Lemma~\ref{lem:ExpectedDiscFourierDecayForOneFactor} that for any fixed columns $\bm{A}^1,\ldots,\bm{A}^{k-1}$ we will have $\E[\phi_{k}(\bm{\theta})] \leq (1-\frac{p}{12}\delta^2) \cdot \phi_{k-1}(\bm{\theta})$ and hence by linearity also $\E[\Phi_k] \leq (1-\frac{p}{12}\delta^2) \cdot \Phi_{k-1}$. Let us write $\alpha_{k} \in [0,1]$ as the random variable so that $\Phi_{k} = (1 - \alpha_{k}) \cdot \Phi_{k-1}$. Then 
$\E[\alpha_k \mid \alpha_{1},\ldots,\alpha_{k-1}] \geq \frac{p\delta^2}{12}$
and $\E[\sum_{k=1}^n \alpha_k] \geq \frac{1}{12}p\delta^2n$. 
By standard Chernov/Martingale concentration bounds, we have 
\[
  \Pr\Big[\sum_{k=1}^n \alpha_k < \frac{1}{2} \E\Big[\sum_{k=1}^n\alpha_k\Big] \Big] \leq \exp\Big(-\frac{1}{8} \E\Big[\sum_{k=1}^n \alpha_k\Big]\Big) \leq \exp\Big(-\frac{1}{96}p\delta^2n\Big)
\]
Assuming that this event does not happen, we have 
\[
 \int_{\bm{\theta} \in K} |\hat{\bm{D}}(\bm{\theta})| d\bm{\theta} = \underbrace{\Phi_0}_{\leq 1} \cdot \prod_{k=1}^n (1-\alpha_k)
 \leq \exp\Big(-\sum_{k=1}^n \alpha_k\Big) \leq \exp\Big(-\frac{1}{24}p\delta^2n\Big).
\]
\end{proof}

It remains to finish the proof of Lemma \ref{lem:intDhatupperbd}. For this sake 
we can choose $\delta:=\frac{1}{16\sqrt{t}}$ and write
\[
  \int_{\bm{\theta} \in [-\frac{1}{2},\frac{1}{2})^m: d_2(\bm{\theta},\Lambda)\ge \frac{1}{16\sqrt{t}}} |\hat{\bm{D}}(\bm{\theta})| d\bm{\theta} 
\stackrel{\textrm{Lem.~\ref{lem:GeneralBoundOnSumOfFarFourierCoeff}}}{\leq} \exp\Big(-\frac{\delta^2}{24} pn\Big) = \exp\Big(- \frac{1}{(16)^2 \cdot 24} \cdot \frac{n}{m}\Big)  
\]
as claimed.

\section{Proof of the main theorem}

Finally, we can put everything together.
\begin{proof}[Proof of Theorem \ref{thm:mainthm}]
Assume that $t\geq C\log(n)$ and $n\geq Cm^2\log(m)$ for a large enough constant $C>0$. 
We can apply Lemmas \ref{lem:lowerbound}, \ref{lem:halfintegralbound} and \ref{lem:intDhatupperbd} to obtain that with high probability we have
\begin{eqnarray*}
\Pr[\bm{X}=\bm{0}]&=&\int_{[-\frac{1}{2},\frac{1}{2})^m} \hat{\bm{X}}(\bm{\theta}) d\bm{\theta}\\
&\geq& \int_{\|\bm{\theta}\|_2 \leq \frac{1}{16\sqrt{t}}} \hat{\bm{X}}(\bm{\theta}) d\bm{\theta}  - \int_{d_2(\bm{\theta},\Lambda \setminus \{\bm{0}\})\le \frac{1}{16\sqrt{t}}} \hat{\bm{X}}(\bm{\theta}) d\bm{\theta} - \underbrace{\int_{d_2(\bm{\theta},\Lambda)> \frac{1}{16\sqrt{t}}} |\hat{\bm{X}}(\bm{\theta})| d\bm{\theta}}_{\leq \exp(-\Theta(n/m))\textrm{ by Lem.}~\ref{lem:intDhatupperbd}} \\
&\ge & \frac{1}{2} \underbrace{\int_{\|\bm{\theta}\|_2 \leq \frac{1}{16\sqrt{t}}} \hat{\bm{X}}(\bm{\theta}) d\bm{\theta}}_{\geq n^{-\Theta(m)}}
-e^{-\Theta(n/m)} \stackrel{n \geq Cm^2\log(m)}{>}0.
\end{eqnarray*}
\end{proof}

\section{Alternate analysis for $\Delta=1$}
In the case of $\Delta=1$, we can simplify our analysis by choosing $\bm{R}$ in such a way that $\bm{X}=\bm{D}+\bm{R}$ will always be even. This allows us to integrate only over $[-\frac{1}{4},\frac{1}{4})^m$, so that we do not need to bound the half-integral points.
This approach is a bit simpler, but it does not seem to extend to larger $\Delta$. For this reason we give the more general version above.

More precisely, suppose we have a fixed $\bm{A}\in \{0,1\}^{m\times n}$. 
Choose $\bm{x}\sim \{-1,1\}^n$ uniformly at random and define $\bm{D}=\bm{A}\bm{x}\in \setZ^m$.
For all $i$ with $\|\bm{A}_i\|_\infty$ odd, choose $R_i\sim \{-1,1\}$ uniformly. For all other $i$, set $R_i=0$.
Then define $\bm{X}=\bm{D}+\bm{R}$.

Notice that $\bm{D}$ and $\bm{R}$ are still independent random variables and so $\hat{\bm{X}}(\bm{\theta})=\hat{\bm{D}}(\bm{\theta}) \cdot \hat{\bm{R}}(\bm{\theta})$.
Moreover, regardless of the choice of $\bm{x}$, $D_i$ has the same parity as $\|\bm{A}_i\|_1$. In particular, we have $\bm{X}\in 2\setZ^m$.

We then obtain
\[\Pr[\bm{X}=\bm{0}] 
= 2^m \int_{\bm{\theta}\in [-\frac{1}{4},\frac{1}{4})^m}\hat{\bm{X}}(\bm{\theta})d\bm{\theta}.
\]


We can compute that $\hat{\bm{R}}(\bm{\theta})=\prod_{i:\|\bm{A}_i\|_\infty \textrm{ odd}}\cos(2\pi \theta_i)$ and so $\hat{\bm{R}}(\bm{\theta})>0$ for $\bm{\theta} \in [-\frac{1}{4},\frac{1}{4})^m$.
Moreover, for $\|\bm{\theta}\|_\infty\le \frac{1}{8}$ we still have
$\hat{\bm{R}}(\bm{\theta})\ge \exp(-\pi^2\|\bm{\theta}\|_2^2-20\|\bm{\theta}\|_2^4)$.

In particular, for this choice of $\bm{R}$, Lemmas \ref{lem:lowerbound} and \ref{lem:intDhatupperbd} still hold. Since we are only integrating over $[-\frac{1}{4},\frac{1}{4})^m$, we do not need to bound the half-integral points by the decay of $\hat{\bm{R}}$, and Theorem \ref{thm:mainthm} follows.

\section{Open problems and conjectures}

As we have seen, the Fourier-analytic method works particularly well for 
random set systems. But there is no a priori reason why it could not be made to work
for arbitrary set systems. As a first step, one should wonder whether one can reprove
Spencer's theorem in our framework: 
\begin{openquestion*}
Can one use the Fourier-analytic framework to show that an arbitrary set system with $m=n$
has discrepancy $O(\sqrt{n})$?
\end{openquestion*}
On the one hand, it is clear that the Fourier 
coefficients $\hat{\bm{D}}(\bm{\theta})$ may be less well behaved than for random set systems. But on the other hand
the value of $\Delta$ can be chosen a lot larger than in our application leading to a faster
decay of $\hat{\bm{R}}(\bm{\theta})$. 

Here we give one extra remark: One could wonder, what is the random variable $R$
supported on $\{-\Delta,\ldots,\Delta\}$ whose Fourier tails decay fastest? A possible way to quantify this is 
to define $\rho(R) := \max\{ |\hat{R}(\theta)| : \frac{1}{4} \leq \theta \leq \frac{1}{2} \}$ and ask how small
$\rho(R)$ can be in terms of $\Delta$. Note that the concrete choice of the interval $[\frac{1}{4},\frac{1}{2}]$ is arbitrary
as long as it does not contain an integer. For our choice of $R \sim \pazocal{R}(\Delta)$ we have seen in \eqref{eq:OneDimRhatUpperBound} that $\rho(R) \leq \exp(-\Theta(\Delta))$. An elegant complex-analytic argument by Chris Bishop (personal communication via Yuval Peres) shows that this bound is asymptotically tight.

The next question deals with the tightness of our bound. Consider for the sake of simplicity 
the case of $p = \frac{1}{2}$ and let us remind ourselves of the
\emph{lower bound} on the discrepancy of random set systems. The argument will work up to a threshold  of $\Delta := n^{1/2-\varepsilon}$. Fix a coloring $\bm{x}$ and pick the matrix $\bm{A} \in \{ 0,1\}^{m \times n}$ at random. 
Then for any set index $i$, we will have $\Pr[|\bm{A}_i\bm{x}| \leq \Delta] \leq O(\frac{\Delta}{\sqrt{n}})$ using that the standard deviation of $\bm{A}_i\bm{x}$ is $\Theta(\sqrt{n})$. Then for that particular coloring
$\bm{x}$ we have $\Pr[\|\bm{A}\bm{x}\|_{\infty} \leq \Delta] \leq (\frac{\Delta}{\sqrt{n}})^m$. 
This bound holds for each of the $2^n$ possible colorings, and so the expected number of good colorings is bounded by $2^n \cdot (\frac{\Delta}{\sqrt{n}})^m$.
For $n\le c\varepsilon m \ln(n)$ and $0 < \Delta \leq n^{1/2-\varepsilon}$, the expected number is less than $\frac{1}{2}$, and so a good coloring cannot exist with high probability.
This argument breaks down if $n \gg \Theta(m\log(n))$. Naturally one wonders whether this is a 
tight construction and whether there is a matching upper bound: 
\begin{openquestion*}
For a large enough constant $C>0$, suppose that $n = Cm\log(m)$. Pick a matrix $\bm{A} \in \{ 0,1\}^{m \times n}$ where each entry is uniformly and independently drawn from $\{0,1\}$ (i.e. $p=\frac{1}{2}$). 
Is then $\textrm{disc}(\bm{A}) \leq n^{1/2-\varepsilon}$ with high probability?
Can one even show an optimal bound of $\disc(\bm{A}) \leq 1$ already in this regime?
\end{openquestion*}

Next, the random model where each incidence appears with probability $p$ will 
in particular create sets that have about the same size, assuming the parameters are chosen 
so that concentration effects kick in. The same holds for the model of Ezra and Lovett~\cite{BeckFialaForRandomSetSystems-EzraLovett2016}. 
Here is a more challenging \emph{semi-random} model. Let $m,n,t \in \setN$ and $0<\delta \leq 1$ be parameters. Suppose an adversary picks a \emph{distribution matrix} $\bm{P} \in [0,\delta]^{m \times n}$ 
with column sum $\|\bm{P}^j\|_1 \leq t$ for all $j \in [n]$. Then a random matrix $\bm{A} \in \{ 0,1\}^{m \times n}$
is chosen where each entry is sampled independently with $\Pr[A_{ij}=1] = P_{ij}$. 
A natural question is the following: 
\begin{openquestion*}
Suppose that $m,n \in \setN$ are arbitrary, $t \geq C \log(m+n)$ and $0<\delta \ll 1$.
Sample a random matrix $\bm{A}$ according to a distribution matrix $\bm{P} \in [0,\delta]^{m \times n}$. Can one show that 
the discrepancy of $\bm{A}$ bounded by $O(\sqrt{t} \cdot \textrm{polylog}(t))$, assuming that $\delta$ is small enough and $C>0$ is large enough?
\end{openquestion*}
Note that for $\delta = 1$, the adversary could choose a deterministic hard matrix $\bm{P} \in \{ 0,1\}^{m \times n}$ and enforce that $\bm{A}=\bm{P}$. Hence $\delta \ll 1$ would be needed and the question should be easier to answer the smaller $\delta$ is, as this adds more randomness. As an intermediate model that still allows the adversary to create sets of various sizes, one could also consider the restriction of the model where 
all entries in the same row of $\bm{P}$ are identical. 

Finally, the reader will have observed that our bound is non-constructive and we do not
know a polynomial time algorithm to find the corresponding colorings. 
\begin{openquestion*}
For $n \geq Cm^2\log(m)$ and $p \in [0,\frac{1}{2}]$ and $t=pm$ with $t \geq C\log(n)$, 
draw a random $\bm{A} \in \{ 0,1\}^{m \times n}$ with $\Pr[A_{ij}=1]=p$ independently for each entry.
Is there a polynomial time algorithm that finds a coloring $\bm{x}$ with $\|\bm{A}\bm{x}\|_{\infty} \leq 1$
with high probability?
\end{openquestion*}
In fact, also for the result of Kuperberg, Lovett and Peled~\cite{DBLP:conf/stoc/KuperbergLP12}, 
no polynomial time algorithm is known to find the constructions that are proven to exist. 
Hence answering this particular question is likely to have an impact far beyond the scope of
this paper.

\paragraph{Independent work.}
An independent work of Franks and Saks \cite{frankssaks18} uses similar techniques to show the discrepancy of random matrices in the regime $n=\tilde{\Theta}(m^3)$ is bounded by $2$ with high probability. Their work applies to a more general setting where the columns are chosen from a distribution on a lattice.

\subsection*{Acknowledgment}

The authors want to thank Chris Bishop and Yuval Peres for insights into the decay of
Fourier coefficients.

\bibliographystyle{alpha}
\bibliography{discft}

\section{Appendix}

Here is the proof of Lemma~\ref{lem:inversionformula}:
\begin{lemma*}[Fourier Inversion Formula] 
For any integer-valued random vector $\bm{X} \in \setZ^m$ and $\bm{\lambda} \in \setZ^m$ one has
\[
 \Pr[\bm{X} = \bm{\lambda}] = \int_{[-\frac{1}{2},\frac{1}{2})^m} \hat{\bm{X}}(\bm{\theta}) \cdot \exp\big(-2\pi i \left<\bm{\lambda},\bm{\theta}\right>\big) d\bm{\theta} 
\]
\end{lemma*}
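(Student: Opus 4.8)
The plan is to unfold the definition of the Fourier transform and swap the order of expectation and integration. Write $\hat{\bm{X}}(\bm{\theta}) = \E[\exp(2\pi i \langle \bm{X},\bm{\theta}\rangle)]$, and since $\bm{X}$ takes values in $\setZ^m$, express this expectation as the (at most countable) sum $\sum_{\bm{k} \in \setZ^m} \Pr[\bm{X} = \bm{k}] \exp(2\pi i \langle \bm{k},\bm{\theta}\rangle)$. Substituting into the right-hand side of the claimed identity and pulling the sum outside the integral gives
\[
\int_{[-\frac{1}{2},\frac{1}{2})^m} \hat{\bm{X}}(\bm{\theta}) \exp(-2\pi i \langle \bm{\lambda},\bm{\theta}\rangle)\, d\bm{\theta} = \sum_{\bm{k} \in \setZ^m} \Pr[\bm{X}=\bm{k}] \int_{[-\frac{1}{2},\frac{1}{2})^m} \exp\big(2\pi i \langle \bm{k}-\bm{\lambda},\bm{\theta}\rangle\big)\, d\bm{\theta}.
\]

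The interchange of sum and integral is the only point requiring any care, and it is handled by dominated convergence (or Fubini--Tonelli): the integrand of the $\bm{k}$-th term is bounded in absolute value by $\Pr[\bm{X}=\bm{k}]$, these bounds sum to $1$, and the domain $[-\tfrac12,\tfrac12)^m$ has finite measure, so the double sum/integral converges absolutely. This is the ``main obstacle'' only in the sense of bookkeeping; there is no analytic difficulty.

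Next I would evaluate the remaining integral by Fubini over the $m$ coordinates: it factors as $\prod_{j=1}^m \int_{-1/2}^{1/2} \exp(2\pi i (k_j - \lambda_j)\theta_j)\, d\theta_j$. For each $j$, the inner integral equals $1$ when $k_j = \lambda_j$ (the integrand is identically $1$), and equals $0$ when $k_j \neq \lambda_j$, since $\int_{-1/2}^{1/2} e^{2\pi i n \theta}\,d\theta = \frac{1}{2\pi i n}\big(e^{\pi i n} - e^{-\pi i n}\big) = \frac{\sin(\pi n)}{\pi n} = 0$ for every nonzero integer $n$. Hence the product is $\mathbb{1}[\bm{k} = \bm{\lambda}]$, the entire sum collapses to its single surviving term, and the right-hand side equals $\Pr[\bm{X} = \bm{\lambda}]$, as claimed. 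Finally I would remark that taking $\bm{\lambda} = \bm{0}$ recovers the specialization \eqref{eq:PrXequalsZeroFTInversion} used in the main argument, and that the same computation shows $\hat{\bm{X}}$ is $\setZ^m$-periodic, so any fundamental domain of the lattice $\setZ^m$ can replace $[-\tfrac12,\tfrac12)^m$.
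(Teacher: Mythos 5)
Your proof is correct and follows essentially the same route as the paper's: interchange the integral with the expectation (which you write out explicitly as a sum over $\setZ^m$ and justify via Fubini/dominated convergence), then evaluate $\int_{[-\frac{1}{2},\frac{1}{2})^m}\exp(2\pi i\left<\bm{k}-\bm{\lambda},\bm{\theta}\right>)d\bm{\theta}$ by coordinate-wise orthogonality of characters, exactly as in the paper's auxiliary cancellation lemma. No issues.
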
 
\begin{proof}
We have
\begin{eqnarray*}
 \int_{[-\frac{1}{2},\frac{1}{2})^m} \hat{\bm{X}}(\bm{\theta}) \cdot \exp\big(-2\pi i \left<\bm{\lambda},\bm{\theta}\right>\big) d\bm{\theta} &=& \int_{[-\frac{1}{2},\frac{1}{2})^m} \E\big[\exp\big(2\pi i \left<\bm{X}-\bm{\lambda},\bm{\theta}\right>\big)\big] d\bm{\theta} \\
 &=& \E\Big[\underbrace{\int_{[-\frac{1}{2},\frac{1}{2})^m} \exp\big(2\pi i \left<\bm{X} - \bm{\lambda}, \bm{\theta}\right>\big) d\bm{\theta}}_{=1\textrm{ if }\bm{X}-\bm{\lambda}=\bm{0}, =0\textrm{ otherwise}}\Big] \\
&=& \Pr[\bm{X}-\bm{\lambda} = \bm{0}]
\end{eqnarray*}
Here in the last step, we have used a cancellation that we prove in more
detail in the next lemma. 
\end{proof}
\begin{lemma}
Let $D := [-\frac{1}{2},\frac{1}{2})^m$ and $\bm{t} \in \setZ^m$. Then
\[
  \int_D \exp(2\pi i\left<\bm{t},\bm{\theta}\right>) d\bm{\theta} = \begin{cases} 1 & \textrm{if }\bm{t} \neq \bm{0} \\ 0 & \textrm{if } \bm{t}=\bm{0}. \end{cases}
\]
\end{lemma}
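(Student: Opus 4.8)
The statement is an elementary multidimensional computation, and the natural route is to reduce it to one dimension by Fubini's theorem and then evaluate a single exponential integral. First I would observe that the integrand $\bm{\theta} \mapsto \exp(2\pi i \langle \bm{t}, \bm{\theta}\rangle) = \prod_{k=1}^m \exp(2\pi i t_k \theta_k)$ factors as a product of functions of the individual coordinates, and that it is bounded (by $1$ in modulus) on the finite-measure box $D = [-\tfrac{1}{2},\tfrac{1}{2})^m$. Hence Fubini applies and
\[
  \int_D \exp(2\pi i \langle \bm{t},\bm{\theta}\rangle)\, d\bm{\theta} = \prod_{k=1}^m \int_{-1/2}^{1/2} \exp(2\pi i t_k \theta_k)\, d\theta_k.
\]

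Next I would evaluate each one-dimensional factor, splitting on whether $t_k = 0$. If $t_k = 0$ the integrand is identically $1$ and the factor equals the length of $[-\tfrac12,\tfrac12)$, namely $1$. If $t_k \neq 0$ (a nonzero integer), then the antiderivative $\frac{1}{2\pi i t_k}\exp(2\pi i t_k \theta_k)$ gives
\[
  \int_{-1/2}^{1/2} \exp(2\pi i t_k \theta_k)\, d\theta_k = \frac{\exp(\pi i t_k) - \exp(-\pi i t_k)}{2\pi i t_k} = \frac{\sin(\pi t_k)}{\pi t_k} = 0,
\]
since $\sin(\pi t_k) = 0$ for integer $t_k$. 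Combining, the product over $k$ equals $1$ if every $t_k = 0$, i.e.\ $\bm{t} = \bm{0}$, and equals $0$ as soon as some coordinate $t_k$ is nonzero, i.e.\ $\bm{t} \neq \bm{0}$. (Note the case labels in the displayed claim appear transposed; the value is $1$ precisely when $\bm{t} = \bm{0}$, matching how the lemma is used in the proof of the Fourier Inversion Formula, where the integrand with $\bm{t} = \bm{X} - \bm{\lambda}$ contributes $1$ exactly on the event $\bm{X} = \bm{\lambda}$.)

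\textbf{Main obstacle.} There is essentially no obstacle: the only points requiring a word of care are the justification of the interchange of integral and product (immediate from boundedness of the integrand and finiteness of $\vol(D)$) and the bookkeeping of the case split $t_k = 0$ versus $t_k \neq 0$, so that one correctly concludes the full product vanishes unless \emph{all} coordinates of $\bm{t}$ vanish.
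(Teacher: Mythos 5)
Your proof is correct and follows essentially the same route as the paper's: reduce to a one-dimensional integral over a coordinate with $t_k \neq 0$, which vanishes because the complex exponential completes full periods over $[-\tfrac12,\tfrac12)$ (the paper isolates a single such coordinate rather than factoring all $m$, but this is the same idea). You are also right that the case labels in the stated lemma are transposed -- the integral equals $1$ precisely when $\bm{t}=\bm{0}$, which is the version actually used in the inversion formula.
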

\begin{proof}
If $\bm{t}=\bm{0}$, then $\int_D \exp(0) d\bm{\theta} = \textrm{vol}_n(D) = 1$. Now suppose that $\bm{t} \neq \bm{0}$. W.l.o.g. assume that $t_1 \neq 0$. We split
the integral as 
\begin{eqnarray*}
 & &  \int_{D} \exp(2\pi i\left<\bm{t},\bm{\theta}\right>) d\bm{\theta} \\
&=& \int_{\theta_2,\ldots,\theta_n \in [-\frac{1}{2},\frac{1}{2})} \exp\Big(2\pi i \sum_{j=2}^n \theta_jt_j\Big) \cdot \Big( \underbrace{\int_{-1/2}^{1/2} \exp(2\pi i t_1\theta_1) d\theta_1}_{=0 \; (*)} \Big) d\theta_2,\ldots,\theta_n = 0
\end{eqnarray*}
Here $(*)$ is true since the integral starts and ends at the same point $\exp(-\pi i t_1) = \exp(+\pi i t_1)$
and goes through the complex unit circle $t_1$ times. Hence all values must cancel out.
\end{proof}

\end{document}